\documentclass[12pt]{amsart}
\usepackage{amsmath,amsfonts,amssymb,amsthm,amstext,pgf,graphicx,hyperref,verbatim,lmodern,textcomp,color,young,tikz}
\usetikzlibrary{decorations}
\usepackage[mathscr]{euscript}
\usetikzlibrary{decorations.markings}
\usetikzlibrary{arrows}
\usepackage{float}
\usepackage{float}
\theoremstyle{plain}
\newtheorem{theorem}{Theorem}[section]
\newtheorem{lemma}[theorem]{Lemma}
\newtheorem{proposition}[theorem]{Proposition}

\theoremstyle{definition}
\newtheorem{defn}{Definition}[section]

\newtheorem{exmp}{Example}[section]

\title{}
\begin{document}
\title[On graphs with equal domination and total domination numbers]{On graphs with equal domination and total domination numbers}
\author[Sudip Bera]{Sudip Bera}
\address[Sudip Bera]{Faculty of Mathematics, Dhirubhai Ambani University, 
Gandhinagar, Gujarat, India}
\email{sudip\_bera@dau.ac.in}

\keywords{Domination number; Total domination number; Girth; Rank of a graph; Vertex multiplication}
\subjclass[2020]{05C69, 05C50}

\begin{abstract}
For a graph $G$ without isolated vertices, $\gamma(G)\le\gamma_t(G)\le 2\gamma(G)$. While graphs attaining $\gamma(G)=\gamma_t(G)$ have been studied extensively, a complete structural description in the smallest nontrivial case $\gamma(G)=2$ has remained open. We resolve this case according to girth. When $g(G)\ne 3$, we show $\gamma_t(G)=2$ forces $G$ bipartite, give an exact degree-sum criterion for this equality, and show $\gamma_t(G)\in\{2,4\}$ under the additional hypothesis $\delta(G)\ge 2$. When $g(G)=3$, we use Golumbic's vertex-multiplication operation together with known classifications of graphs of rank $2$ through $5$ to completely list the families satisfying $\gamma(G)=\gamma_t(G)=2$. As an application, we show that every graph in the extremal family of diameter-two, dominating-vertex-free graphs identified by Erd\H{o}s and R\'enyi and classified by Henning and Southey satisfies $\gamma_t(G)\in\{3,6\}$, so $\gamma=\gamma_t=2$ never occurs there. Together these results give a full structural dictionary translating $\gamma_t(G)=2$ into concrete, checkable graph-theoretic properties.
\end{abstract}

\maketitle	

\section{Introduction}

Let $G=(V,E)$ be a simple, connected, undirected graph of order $n=|V(G)|$. A set $D \subseteq V(G)$ is a \emph{dominating set} of $G$ if every vertex not in $D$ has a neighbor in $D$; the minimum cardinality of a dominating set is the \emph{domination number} $\gamma(G)$. A set $D\subseteq V(G)$ is a \emph{total dominating set} if every vertex of $G$ including those in $D$ itself has a neighbor in $D$; the minimum cardinality of such a set is the \emph{total domination number} $\gamma_t(G)$, a parameter introduced by Cockayne, Dawes, and Hedetniemi \cite{Cockayne-Dawes-Hedetniemi-1980} and since developed into a substantial branch of domination theory, surveyed comprehensively by Henning and Yeo \cite{HenningYeo-book}.

Domination and total domination are closely related but genuinely different invariants. Every total dominating set is a dominating set, and it is elementary that
\[
\gamma(G) \;\le\; \gamma_t(G) \;\le\; 2\gamma(G)
\]
for every graph $G$ without isolated vertices. Both inequalities can be tight, and a natural and recurring theme in the literature is to characterize the graphs for which the \emph{lower} bound is attained, that is, for which
$\gamma(G) = \gamma_t(G).$
Such graphs are of interest for at least two reasons. First, from a purely structural standpoint, equality forces every minimum dominating set to already be ``self-sufficient'' under the stronger total domination requirement, which imposes rigid adjacency conditions on the dominating set itself. Second, from an algorithmic standpoint, graphs satisfying $\gamma=\gamma_t$ are precisely those for which the (generally harder to compute) total domination number can be read off immediately from an ordinary minimum dominating set, which is of practical value in network design and facility location models where total domination corresponds to guaranteeing that every dominator itself receives ``backup'' coverage from another dominator.

The extremal behaviour of graphs with small diameter has played a prominent role in this circle of ideas. Goddard and Henning \cite{GoddardHenning2002} showed that every planar graph of diameter $2$ satisfies $\gamma_t(G) \le 3$; MacGillivray and Seyffarth \cite{MacGillivraySeyffarth1996} bounded the domination number of planar graphs of diameter $3$; and Dorfling, Goddard, and Henning \cite{DorflingGoddardHenning2006} refined the diameter-$3$ planar bound for total domination according to the radius of $G$. These results highlight that even mild restrictions on diameter can force strong upper bounds on $\gamma_t(G)$, but they say little about exactly when equality $\gamma=\gamma_t$ occurs.

The present paper focuses on the smallest interesting instance of this problem: graphs with $\gamma(G)=2$. This case is already far from trivial. Since $\gamma_t(G)\ge \gamma(G)=2$ always, the only question is whether $\gamma_t(G)$ equals $2$ or exceeds it. 
We investigate the two natural regimes distinguished by the girth $g(G)$ of the graph $G$.

\begin{itemize}
\item When $g(G) \ne 3$ that is $G$ has no triangles supporting the dominating pair we prove that $\gamma_t(G)=2$ forces $G$ to be bipartite, and we characterize exactly which bipartite graphs with $\gamma(G)=2$ achieve $\gamma_t(G)=2$ via an explicit degree-sum condition on a dominating pair with disjoint neighborhoods. As a further refinement, when the minimum degree $\delta(G) \ge 2$ we show the total domination number is confined to the two values $\{2,4\}$, ruling out $\gamma_t(G)=3$ entirely by an odd-cycle parity argument.

\item When $g(G)=3$, triangles are available and the classification becomes considerably richer. Here we exploit the graph-multiplication operation of Golumbic \cite{Graph-multiplication-Golumbic} which replaces each vertex of a base graph by an independent ``blow-up'' set together with existing classifications of all graphs of matrix rank $2, 3, 4,$ and $5$ due to \cite{Rank-4-graph-LAA,LAA-rank-5-graph-charecterization,Nulity-of-graphs-ELA}. We show that both girth and total domination number behave predictably under this operation, which allows the rank classification to be lifted verbatim into a complete list of graph families realizing $\gamma(G)=\gamma_t(G)=2$ with $g(G)=3$.
\end{itemize}

Finally, we connect our results to a classical extremal problem. Erd\H{o}s and R\'{e}nyi \cite{ERDOS-RENYEL} determined the minimum number of edges in a diameter-$2$ graph with no dominating vertex, and Henning and Southey \cite{Henning-Michael} later characterized the extremal family $\mathcal{H}$ attaining this minimum. We show that every graph in $\mathcal{H}$ satisfies $\gamma_t(\Gamma) \in \{3,6\}$, so equality $\gamma=\gamma_t$ never arises in this extremal setting a fact that was not previously observed and which illustrates the tension between edge-minimality (forced by diameter $2$) and the adjacency requirements needed for total domination equality.

Taken together, our results provide, to the best of our knowledge, the first complete structural dictionary translating the numerical condition $\gamma(G)=\gamma_t(G)=2$ into explicit, verifiable graph properties, spanning bipartite and triangle-containing graphs alike, and tying the classification to both classical extremal domination theory and the algebraic theory of graphs of small rank.

\subsection{Terminology and Notation}

Throughout the paper, $G$ denotes a simple connected graph on $n$ vertices. The \emph{order} and \emph{size} of a graph are the cardinalities of its vertex set and edge set, respectively. We write $P_n$, $C_n$, and $K_n$ for the path, cycle, and complete graph on $n$ vertices, and $\deg(v)$, $\delta(G)$, $N(v), r(G)$ for the degree of a vertex $v$, the minimum degree of $G$, the neighborhood of $v$, and the rank of $G$ respectively. For $u,v\in V(G)$, $d(u,v)$ denotes their distance, and $\operatorname{diam}(G)$ denotes the diameter of $G$. We denote the set $\{1, \cdots, n\}$ by $[n].$

We now recall a graph operation, due to Golumbic \cite{Graph-multiplication-Golumbic}, that serves as our principal tool for building infinite families of graphs from a finite list of reduced graphs.

\begin{defn}\label{def:mult-of-vertices}
Let $G$ be a simple connected graph with vertex set $V(G)=\{v_1,\ldots,v_n\}$, and let $m=(m_1,\ldots,m_n)$ be a vector of positive integers. The \emph{blow-up} $G\bigodot m$ is obtained from $G$ by replacing each vertex $v_i$ with an independent set
\[
V_i = \{v_i^1,\ldots,v_i^{m_i}\}
\]
of $m_i$ vertices, and joining $v_i^s$ to $v_j^t$ if and only if $v_iv_j\in E(G)$. We call $V_i$ the \emph{part of $G\bigodot m$ corresponding to $v_i$}, and say that $G\bigodot m$ is obtained from $G$ by \emph{multiplication of vertices} (Golumbic \cite{Graph-multiplication-Golumbic}, p.~53).

If $m_j=1$ for all $j\ne i$, we say $G\bigodot m$ is obtained from $G$ by \emph{multiplying the single vertex $v_i$}; if moreover $m_i=2$, this is called \emph{duplicating} $v_i$ (Henning and Southey \cite{Henning-Michael}). If $G$ has a vertex of degree $2$, \emph{degree-$2$ vertex duplication} denotes the operation of duplicating any such vertex.

For graphs $G_1,\ldots,G_k$, we write $\mathcal{M}(G_1,\ldots,G_k)$ for the family of all graphs obtainable from some $G_\ell$ by multiplication of vertices. Since $G\bigodot(1,\ldots,1)=G$, each $G_\ell$ itself belongs to $\mathcal{M}(G_1,\ldots,G_k)$.
\end{defn}

Throughout the remainder of the paper, $m=(m_1,\ldots,m_n)$ always denotes a vector of positive integers, and $G\bigodot m$ the corresponding blow-up of $G$ as in Definition~\ref{def:mult-of-vertices}.

\section{Triangle-Free Graphs: A Bipartite Reduction}

We now consider the case $g(G)\neq 3$, that is, triangle-free graphs. Here the absence of $3$-cycles imposes a surprisingly strong constraint: any witness of $\gamma_t(G)=2$ forces the underlying graph to be bipartite.

\begin{proposition}\label{prop:Td=2-bipartite}
Let $G$ be a connected graph with $\gamma(G)=2$ and $g(G)\neq 3$. If $\gamma_t(G)=2,$ then $G$ is bipartite.
\end{proposition}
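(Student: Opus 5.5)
Suppose $\gamma_t(G)=2$ and fix a total dominating set $D=\{u,v\}$. Since each vertex of $D$ must have a neighbor in $D$, we get $uv\in E(G)$. Every other vertex $w\notin D$ is adjacent to $u$ or to $v$. The plan is to use the hypothesis $g(G)\neq 3$ (so $G$ is triangle-free) to split $V(G)$ into two independent sets, which exhibits the bipartition directly.

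Set
\[
A=N(u)\setminus\{v\},\qquad B=N(v)\setminus\{u\}.
\]
Then $V(G)=\{u,v\}\cup A\cup B$. First I will check that $A\cap B=\emptyset$: a vertex in both sets would form a triangle with $u$ and $v$. Next, $A$ is independent, because an edge inside $A$ together with $u$ gives a triangle. By the same argument, $B$ is independent.

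Now take the candidate parts
\[
X=\{v\}\cup A,\qquad Y=\{u\}\cup B.
\]
I need each of these to be independent.
\begin{itemize}
\item $X$: the vertex $v$ is not adjacent to any vertex of $A$, since $a\in A$ with $va\in E(G)$ would give the triangle $uva$. Combined with $A$ independent, $X$ is independent.
\item $Y$: symmetrically, $u$ is not adjacent to any vertex of $B$, and $B$ is independent, so $Y$ is independent.
\end{itemize}
Edges between $A$ and $B$ are allowed, and so are the edges $u$–$A$, $v$–$B$ and $uv$. All of these run between $X$ and $Y$.

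Hence $V(G)=X\sqcup Y$ with both parts independent, so $G$ is bipartite. None of the steps is a real obstacle; the only point needing care is the disjointness of $A$ and $B$, which again reduces to triangle-freeness. The hypothesis $\gamma(G)=2$ is used only to ensure that $G$ has at least two vertices and no dominating vertex, and it plays no further role.
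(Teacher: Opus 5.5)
Your proof is correct and is essentially the paper's argument. The paper also uses triangle-freeness to show that $N(u)$ and $N(v)$ are independent and disjoint, and then takes the bipartition $\{u\}\cup N(v)$, $\{v\}\cup N(u)$, which is exactly your $Y$ and $X$.
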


\begin{proof}
Let $D=\{u,v\}$ be a total dominating set of $G$. 
Denote
\[
N(u)=\{u_1,\dots,u_k\}
\quad \text{and} \quad
N(v)=\{v_1,\dots,v_r\}.
\]
Since $D$ is minimum dominating set $N(u)$ and $N(v)$ are non-empty.
We prove the following two claims.

\medskip
\noindent
\textit{Claim I.} $N(u)$ and $N(v)$ are independent sets.

Suppose that $u_i\sim u_j$ for some $i\neq j$. Then
$u_i \sim u \sim u_j \sim u_i$ forms a cycle of length $3$, contradicting the assumption that $g(G)\neq 3$. Hence, $N(u)$ is an independent set. By a similar argument, $N(v)$ is also an independent set.

\medskip
\noindent
\textit{Claim II.} $N(u)\cap N(v)=\emptyset$.

Suppose there exists a vertex $x\in N(u)\cap N(v)$. Then $x\sim u$ and $x\sim v$. Since $u\sim v$, it follows that
$u \sim x \sim v \sim u$
is a cycle of length $3$, again contradicting $g(G)\ge 4$. Therefore, $N(u)\cap N(v)=\emptyset$. Now set
\[
A=\{u\}\cup N(v)
\quad \text{and} \quad
B=\{v\}\cup N(u).
\]
By Claims I and II, both $A$ and $B$ are independent sets. Moreover, since $D=\{u,v\}$ is a dominating set, every vertex of $G$ is adjacent to either $u$ or $v$, and hence $V(G)=A\cup B.$ Thus, $G$ admits a bipartition $(A,B)$, and therefore $G$ is bipartite.
\end{proof}

\begin{lemma}\label{Lemma:D-td-iff-deg=V(G)}
Let $G$ be a connected graph with $g(G)\neq 3$, $\gamma(G)=2.$ 
Let $D=\{u,v\}$ be a dominating set of $G$. Then $D$ is a total dominating set if and only if the following hold:
\begin{itemize}
\item $N(u)\cap N(v)=\emptyset,$ and
\item $\deg(u)+\deg(v)=|V(G)|.$
\end{itemize}
\end{lemma}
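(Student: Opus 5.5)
The plan is to prove both directions directly from the definitions, using the triangle-free hypothesis only in the forward direction. The key observation throughout is that, for a dominating pair $D=\{u,v\}$, being total is equivalent to $N(u)\cup N(v)=V(G)$. A vertex $w\notin D$ already has a neighbour in $D$ by domination, so the only extra requirement is that $u$ and $v$ themselves are dominated. That happens exactly when $u\in N(v)$ and $v\in N(u)$, i.e.\ when $u\sim v$.

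For the forward direction, assume $D=\{u,v\}$ is a total dominating set.
\begin{itemize}
\item Then $u$ must have a neighbour in $D$, and since $G$ is simple this neighbour is $v$, so $u\sim v$.
\item Claim II in the proof of Proposition~\ref{prop:Td=2-bipartite} then applies verbatim: a common neighbour $x$ of $u$ and $v$ would give the triangle $u\sim x\sim v\sim u$, contradicting $g(G)\neq 3$. Hence $N(u)\cap N(v)=\emptyset$.
\item Every vertex of $G$ has a neighbour in $D$, so $V(G)=N(u)\cup N(v)$.
\item Since this union is disjoint, $|V(G)|=|N(u)|+|N(v)|=\deg(u)+\deg(v)$.
\end{itemize}

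For the converse, assume $D$ is dominating, $N(u)\cap N(v)=\emptyset$, and $\deg(u)+\deg(v)=|V(G)|$.
\begin{itemize}
\item By disjointness, $|N(u)\cup N(v)|=\deg(u)+\deg(v)=|V(G)|$, so $N(u)\cup N(v)=V(G)$.
\item In particular $u\in N(u)\cup N(v)$. Since $G$ has no loops, $u\notin N(u)$, so $u\in N(v)$, i.e.\ $u\sim v$.
\item Therefore every vertex of $G$, including $u$ and $v$, has a neighbour in $D$, and $D$ is a total dominating set.
\end{itemize}

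There is no serious obstacle. The only points needing care are, first, extracting the adjacency $u\sim v$ in each direction: from totality in the forward direction, and from the counting identity in the converse. Second, one should note that the girth hypothesis (and $\gamma(G)=2$) is needed only to obtain disjointness in the forward direction. The converse holds for any graph with a dominating pair satisfying the two conditions.
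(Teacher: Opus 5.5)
Your proof is correct and follows essentially the same route as the paper. In the forward direction, totality gives $u\sim v$, the triangle argument of Claim~II gives $N(u)\cap N(v)=\emptyset$, and the disjoint union $V(G)=N(u)\cup N(v)$ yields the degree count. In the converse, counting recovers $N(u)\cup N(v)=V(G)$, and $u\notin N(u)$ then gives $u\sim v$.

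You are slightly more careful than the paper: it invokes Claim~II without first deriving $u\sim v$, and it uses $u\notin N(u)$ silently. Your closing aside overstates the hypotheses, though. $\gamma(G)=2$ is never used, and triangle-freeness is needed only in the forward direction.
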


\begin{proof}
Suppose that $D=\{u,v\}$ is a total dominating set of $G$. Since $g(G)\neq 3$, it follows by the same argument used in the proof of the claims in Proposition~\ref{prop:Td=2-bipartite} that
\[
N(u)\cap N(v)=\emptyset,
\]
and moreover, each of $N(u)$ and $N(v)$ is an independent set. Because $D$ is a total dominating set, every vertex of $G$ lies in $N(u)\cup N(v)$. Hence,
\[
|V(G)| \le \deg(u) + \deg(v).
\]
Moreover, since $N(u)\cap N(v)=\emptyset$, we have
\[
|N(u)\cup N(v)| = \deg(u)+\deg(v).
\]

Now, $D$ is a total dominating set so, $u\in N(v)$ and $v\in N(u)$, and consequently
\[
V(G)=N(u)\cup N(v),
\]
which implies
\[
\deg(u)+\deg(v)=|V(G)|.
\]

Conversely, if $N(u)\cap N(v)=\emptyset,$ and $\deg(u)+\deg(v)=|V(G)|$, then every vertex of $G$ belongs to $N(u)\cup N(v)$. In particular, $u\in N(v)$ and $v\in N(u)$, and hence $u\sim v$. Therefore, $D$ is a total dominating set. This completes the proof.
\end{proof}

\begin{theorem}
Let $G$ be a connected graph with $g(G)\neq 3$, and $\gamma(G)=2.$ 
Then $\gamma_t(G)=2$ if and only if the following conditions are satisfied:
\begin{itemize}
    \item $G$ is bipartite, and
    \item $G$ admits a dominating set $D=\{u,v\}$ such that 
    \[
    N(u)\cap N(v)=\emptyset 
    \quad \text{and} \quad 
    \deg(u)+\deg(v)=|V(G)|.
    \]
\end{itemize}
\end{theorem}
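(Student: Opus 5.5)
The theorem combines Proposition~\ref{prop:Td=2-bipartite} with Lemma~\ref{Lemma:D-td-iff-deg=V(G)}, so I will argue the two implications separately.

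For the forward direction, assume $\gamma_t(G)=2$ and fix a total dominating set $D=\{u,v\}$. Since $G$ is connected, $g(G)\neq 3$ and $\gamma(G)=2$, Proposition~\ref{prop:Td=2-bipartite} applies directly and gives that $G$ is bipartite. A total dominating set is in particular a dominating set, so $D$ is a dominating pair of $G$. The forward implication of Lemma~\ref{Lemma:D-td-iff-deg=V(G)} then yields $N(u)\cap N(v)=\emptyset$ and $\deg(u)+\deg(v)=|V(G)|$. This gives both bulleted conditions.

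For the converse, assume $G$ is bipartite and has a dominating set $D=\{u,v\}$ with $N(u)\cap N(v)=\emptyset$ and $\deg(u)+\deg(v)=|V(G)|$. The reverse implication of Lemma~\ref{Lemma:D-td-iff-deg=V(G)} shows that $D$ is a total dominating set, so $\gamma_t(G)\le 2$. Combined with $\gamma_t(G)\ge\gamma(G)=2$, this gives $\gamma_t(G)=2$. The bipartiteness hypothesis is not used in this direction; it appears in the statement only because the forward direction forces it.

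The one point that needs care is the converse step of Lemma~\ref{Lemma:D-td-iff-deg=V(G)}: from disjointness and $\deg(u)+\deg(v)=|V(G)|$ it infers $V(G)=N(u)\cup N(v)$. This inference is valid only because, for a simple graph with $\gamma(G)=2$, the sets $N(u)$ and $N(v)$ are subsets of $V(G)$ with $|N(u)\cup N(v)|=\deg(u)+\deg(v)=|V(G)|$. Once $V(G)=N(u)\cup N(v)$, we get $u\in N(v)$, so $u\sim v$, and every vertex has a neighbour in $D$. Since the lemma is stated earlier, I will invoke it as given, and I expect no real obstacle; the whole proof is a short assembly of these two results.
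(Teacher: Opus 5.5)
Your proposal is correct and follows the paper's proof: Proposition~\ref{prop:Td=2-bipartite} gives bipartiteness, Lemma~\ref{Lemma:D-td-iff-deg=V(G)} handles the disjointness and degree-sum conditions in both directions, and $\gamma_t(G)\ge\gamma(G)=2$ closes the converse. The differences are minor. You invoke the lemma's forward direction where the paper re-derives disjointness inline from the bipartition. Also, your remark that the counting step relies on $\gamma(G)=2$ is unnecessary, since $N(u)\cup N(v)\subseteq V(G)$ with equal cardinalities already forces equality in any finite simple graph.
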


\begin{proof}
First, suppose that $G$ is bipartite and that $G$ has a dominating set 
$D=\{u,v\}$ satisfying the conditions, \[
    N(u)\cap N(v)=\emptyset 
    \quad \text{and} \quad 
    \deg(u)+\deg(v)=|V(G)|.
    \]
Then, by Lemma~\ref{Lemma:D-td-iff-deg=V(G)}, $D$ is a total dominating set. 
Since $\gamma(G)=2$, it follows that $\gamma_t(G)=2$.

Conversely, suppose that $\gamma_t(G)=2$ and let $D=\{u,v\}$ be a total dominating set of $G$. 
Then $u\sim v$. By Proposition~\ref{prop:Td=2-bipartite}, $G$ is bipartite with bipartition
\[
A=N(u)\cup\{v\}
\quad \text{and} \quad
B=N(v)\cup\{u\}.
\] That is, $N(u)\cap N(v)=\emptyset.$
Moreover, since $D$ is a total dominating set, we have $u$ and $v$ are edge connected and hence
\[
\deg(u)+\deg(v)=|V(G)|.
\]

This completes the proof.
\end{proof}

\begin{theorem}
Let $G$ be a connected bipartite graph with $g(G)\neq 3$, $\gamma(G)=2$, and $\delta(G)\ge 2$. Then $\gamma_t(G)\in\{2,4\}$.
\end{theorem}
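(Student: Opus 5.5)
The natural plan is to sandwich $\gamma_t(G)$ and then exclude the middle value; however, when I carry out the exclusion step it breaks down, and I find what appears to be a counterexample to the statement as written.

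\textbf{Sandwich.} Since $\gamma(G)=2$ and $\gamma(G)\le\gamma_t(G)\le 2\gamma(G)$, we get $\gamma_t(G)\in\{2,3,4\}$. So the plan is to assume $\gamma_t(G)=3$ and derive a contradiction.

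\textbf{Structure of a size-$3$ total dominating set.} Let $S$ be a total dominating set with $|S|=3$ and bipartition $(A,B)$. The induced subgraph $G[S]$ has no isolated vertex and is bipartite, so it is a path $x\sim y\sim z$ with $x,z\in A$ and $y\in B$. Every vertex of $A$ needs a neighbour in $S\cap B=\{y\}$, so $N(y)=A$. Every vertex of $B$ needs a neighbour in $\{x,z\}$, so $B\subseteq N(x)\cup N(z)$.

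\textbf{Structure of a dominating pair.} Let $\{p,q\}$ be a dominating set. If $p,q\in A$, then every vertex of $A\setminus\{p,q\}$ would be undominated, so $A=\{p,q\}$. In that case $\delta(G)\ge2$ forces every $b\in B$ to be adjacent to both $p$ and $q$, and $\{p,b\}$ is a total dominating set. Hence $\gamma_t(G)=2$.

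The remaining case is $p\in A$, $q\in B$. If $p\sim q$, then $\gamma_t(G)=2$ at once. Otherwise $N(p)=B\setminus\{q\}$ and $N(q)=A\setminus\{p\}$. To finish, I intended to combine $N(y)=A$ with $N(p)=B\setminus\{q\}$ and $\delta\ge2$, and show that some vertex of $A$ is adjacent to all of $B$. That vertex together with $y$ would be a total dominating pair, contradicting $\gamma_t(G)=3$.

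\textbf{This last step fails.} Consider the following graph:
\[
A=\{p,a_1,a_2\},\qquad B=\{q,y,b_1,b_2\},
\]
\[
N(p)=\{y,b_1,b_2\},\quad N(a_1)=\{q,y,b_1\},\quad N(a_2)=\{q,y,b_2\}.
\]
It has the following properties:
\begin{itemize}
\item It is connected and bipartite with bipartition $(A,B)$, and $\delta(G)=2$.
\item $\{p,q\}$ is a dominating set and there is no dominating vertex, so $\gamma(G)=2$.
\item $\{y,p,a_1\}$ is a total dominating set, so $\gamma_t(G)\le3$.
\end{itemize}
Now suppose $\{u,v\}$ were a total dominating set. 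It would have to be an edge with $u\in A$, $v\in B$, $N(u)=B$ and $N(v)=A$. But no vertex of $A$ is adjacent to all of $B$. Hence $\gamma_t(G)=3$.

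So the stated theorem appears false, and the parity obstruction I expected cannot be established. The main obstacle is therefore not technical: the statement itself seems to need an additional hypothesis. I would check against this example before trying to repair the argument.
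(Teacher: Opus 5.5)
Your counterexample is correct, and the statement as written is false. Checking it: the graph is connected and bipartite with parts $A$ and $B$. Here $\deg(q)=\deg(b_1)=\deg(b_2)=2$ and every other vertex has degree $3$, so $\delta(G)=2$. The set $\{p,q\}$ dominates and no vertex is adjacent to all six others, so $\gamma(G)=2$. The set $\{y,p,a_1\}$ is a total dominating set. A total dominating pair would have to be an edge $uv$ with $u\in A$ and $N(u)=B$, which is impossible because vertices of $A$ have degree $3<|B|$. Hence $\gamma_t(G)=3$. The only dominating pairs are $\{p,q\}$, $\{a_1,b_2\}$ and $\{a_2,b_1\}$, and none of them is an edge. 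Your preliminary reductions are also sound. Together with $C_4$ ($\gamma_t=2$) and $C_6$ ($\gamma_t=4$), your example shows that all three values $2,3,4$ occur under the stated hypotheses.

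The paper's proof fails at the sentence ``Suppose $\gamma_t(G)=3$. Then there exists a vertex $x$ such that $u\sim x$ and $v\sim x$.'' This reverses an implication. A common neighbour $x$ of the dominating pair would give the total dominating set $\{u,x,v\}$, but a $3$-element total dominating set need not contain $u$ and $v$ at all. In your graph every dominating pair is split across the bipartition, so no such $x$ exists, yet $\{y,p,a_1\}$ is total dominating.

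More generally, in a bipartite graph a common neighbour $x$ would put $u$ and $v$ in the same part. Then $u_i\in N(u)$ and $v_t\in N(v)$ would also lie in a common part, so the edge $u_iv_t$ closing the $5$-cycle could not exist. The odd cycle is assembled from mutually incompatible assumptions, so the resulting contradiction refutes only the unjustified existence of $x$, not $\gamma_t(G)=3$. No parity argument can repair this: the theorem must be withdrawn or given additional hypotheses. You were right to stop and flag the example rather than force the last step.
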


\begin{proof}
Let $D=\{u,v\}$ be a minimum dominating set of $G$. Denote
\[
N(u)=\{u_1,\dots,u_k\}
\quad \text{and} \quad
N(v)=\{v_1,\dots,v_r\}.
\]

Since $D$ is a \emph{minimum} dominating set, there exist vertices 
$u_i \in N(u)$ and $v_j \in N(v)$ such that $u_i \nsim v$ and 
$v_j \nsim u$; otherwise, one of the vertices $u$ or $v$ could be removed while preserving domination, contradicting the minimality of $D$.

By Claim~I of Proposition~\ref{prop:Td=2-bipartite}, the sets $N(u)$ and $N(v)$ are independent. Moreover, since $\delta(G)\ge 2$, every vertex has degree at least two. Hence, there exist vertices $u_s \in N(u)$ and $v_t \in N(v)$ such that
$u_i \sim v_t
\text{ and } 
u_s \sim v_j.$
Suppose, for a contradiction, that $\gamma_t(G)=3$. Then there exists a vertex $x$ such that 
$u \sim x 
 \text{ and }
v \sim x.$
Consequently, we obtain a cycle
$C:\; u \sim x \sim v \sim v_t \sim u_i \sim u$
of length $5$. 
This contradicts the fact that $G$ is bipartite, since bipartite graphs contain no odd cycles. Therefore, $\gamma_t(G)\neq 3$. This completes the proof.
\end{proof}

\section{Girth Three: A Finite Classification via Rank}

Having settled the triangle-free case in the previous section, we now turn to the complementary and considerably more delicate setting in which $G$ contains a triangle, that is, $g(G)=3$. Here our approach exploits the adjacency-matrix rank $r(G)$ as a classifying invariant: since graphs of small rank have been completely characterized, up to the vertex-multiplication (``blow-up'') operation of Definition~\ref{def:mult-of-vertices}, in terms of a short list of reduced graphs, the problem of determining when $\gamma(G)=\gamma_t(G)=2$ within each rank class reduces to a finite check on these reduced graphs.

\begin{lemma}\label{lemma:girth-blowup}
$g(G)=3$ if and only if $g(G\bigodot m)=3$.
\end{lemma}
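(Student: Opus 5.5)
The proof is a direct two-way correspondence between triangles of $G$ and triangles of $G\bigodot m$, using only Definition~\ref{def:mult-of-vertices}. The one structural fact needed is that each part $V_i$ is an independent set. Hence two vertices of the blow-up that are adjacent must lie in distinct parts, and adjacency between parts $V_i$ and $V_j$ is complete exactly when $v_iv_j\in E(G)$.

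For the forward direction, suppose $g(G)=3$, and let $v_iv_jv_k$ be a triangle in $G$ with $i,j,k$ distinct. We choose any representatives $v_i^1\in V_i$, $v_j^1\in V_j$ and $v_k^1\in V_k$, which exist because every $m_\ell\ge 1$. Each pair among these three lies in parts whose base vertices are adjacent, so by the definition the three vertices are pairwise adjacent. This gives a triangle in $G\bigodot m$. A simple graph has no cycles of length less than $3$, so $g(G\bigodot m)=3$.

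For the converse, suppose $G\bigodot m$ contains a triangle $x\,y\,z$ with $x\in V_i$, $y\in V_j$ and $z\in V_k$. Since $x\sim y$ and $V_i$ is independent, $i\ne j$; in the same way $j\ne k$ and $i\ne k$. The edge rule of the blow-up holds only when the base vertices are adjacent, so $v_iv_j$, $v_jv_k$ and $v_iv_k$ are all edges of $G$. Thus $v_iv_jv_k$ is a triangle in $G$, and $g(G)=3$.

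None of these steps is a real obstacle. The only point needing care is the converse, where the three vertices of the triangle must be shown to lie in three distinct parts; this is exactly where the independence of each $V_i$ is used. (Both directions also tacitly require that $G$ have a cycle at all for the girth to be finite, but the statement concerns only the value $3$, so this does not matter.)
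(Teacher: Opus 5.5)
Your proof is correct and follows the paper's argument essentially verbatim: you lift a triangle of $G$ by choosing one vertex from each of the three corresponding parts, and conversely you use the independence of each part $V_\ell$ to place the three triangle vertices in distinct parts, whose base vertices then form a triangle in $G$. Your added remark that a simple graph has no cycles shorter than $3$, so that a triangle forces girth exactly $3$, is a small point the paper leaves implicit.
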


\begin{proof}
Suppose first that $g(G)=3$. Then $G$ contains a triangle, say with vertex set ${v_i,v_j,v_k}$. By the definition of $G\bigodot m$, the vertices $v_i,v_j,v_k$ are replaced by nonempty independent sets $V_i,V_j,V_k$, respectively, and every vertex of $V_i$ is adjacent to every vertex of $V_j$ and $V_k$, while every vertex of $V_j$ is adjacent to every vertex of $V_k$. Choosing arbitrary vertices $x_i\in V_i$, $x_j\in V_j$, and $x_k\in V_k$, we obtain a triangle in $G\bigodot m$. Hence $g(G\bigodot m)=3$.

Conversely, suppose that $g(G\bigodot m)=3$. Then $G\bigodot m$ contains a triangle, say with vertices $x_i,x_j,x_k$. Since each set $V_\ell$ corresponding to a vertex $v_\ell$ of $G$ is independent, no two vertices of the triangle can belong to the same part. Thus $x_i,x_j,x_k$ belong to three distinct parts $V_i,V_j,V_k$. By the definition of $G\bigodot m$, the adjacency of $x_i$ and $x_j$, $x_j$ and $x_k$, and $x_k$ and $x_i$ implies that $v_i v_j$, $v_jv_k$, and $v_kv_i$ are edges of $G$. Consequently, $v_i,v_j,v_k$ form a triangle in $G$, and therefore $g(G)=3$.
\end{proof}

\begin{lemma}\label{Lemma;Dom(GOm)geDom(G)}
$\gamma(G\bigodot m)\ \geq\ \gamma(G)$.
\end{lemma}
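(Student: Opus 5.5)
The plan is to project a minimum dominating set of $G\bigodot m$ down to $G$ and show that the projection is still dominating. Let $\pi: V(G\bigodot m)\to V(G)$ be the natural projection sending every vertex of the part $V_i$ to $v_i$. By Definition~\ref{def:mult-of-vertices}, $x\in V_i$ and $y\in V_j$ are adjacent in $G\bigodot m$ if and only if $v_iv_j\in E(G)$. In particular $i\neq j$ for adjacent vertices, since $G$ is simple and each $V_i$ is independent. So $\pi$ is a graph homomorphism: it maps edges to edges, and it is surjective on vertices because every $m_i\ge 1$.

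Now let $S$ be a minimum dominating set of $G\bigodot m$, so $|S|=\gamma(G\bigodot m)$, and put $D=\pi(S)\subseteq V(G)$. Clearly $|D|\le |S|$. The key step is to check that $D$ dominates $G$. Take any $v_i\in V(G)$ and pick a vertex $x\in V_i$, which exists since $m_i\ge1$. Either $x\in S$, in which case $v_i=\pi(x)\in D$, or $x$ has a neighbour $y\in S$. In the second case $y\in V_j$ with $v_iv_j\in E(G)$, so $v_i$ is adjacent to $\pi(y)=v_j\in D$. Hence every vertex of $G$ lies in $D$ or has a neighbour in $D$, and therefore $\gamma(G)\le|D|\le|S|=\gamma(G\bigodot m)$.

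There is no serious obstacle here. The only points needing care are that each part is nonempty, which is needed to pick $x\in V_i$, and that adjacency in the blow-up projects to genuine adjacency in $G$ rather than to a loop; both follow directly from the definition. The same projection argument, applied to a total dominating set, would also give $\gamma_t(G)\le\gamma_t(G\bigodot m)$, which is presumably what the paper uses next.
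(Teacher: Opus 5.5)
Your proposal is correct and takes the same approach as the paper. You project a minimum dominating set $S$ of $G\bigodot m$ onto its base vertices $\{v_i : V_i\cap S\neq\emptyset\}$ and check that this set dominates $G$, using that an edge between parts $V_i$ and $V_j$ forces $v_iv_j\in E(G)$; your explicit case split on whether the chosen $x\in V_i$ lies in $S$ is just a slightly more careful version of the paper's argument for $v_\ell\notin D_1$.
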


\begin{proof}
Let $D=\{v_{i_1}^{j_1},\ldots,v_{i_t}^{j_t}\}$ be a minimum dominating set of $G\bigodot m$, so $t=\gamma(G\bigodot m)$, and set $D_1=\{v_{i_1},\ldots,v_{i_t}\}\subseteq V(G)$. For any $v_\ell\notin D_1$, some vertex of $V_\ell$ is dominated by $D$, hence adjacent to some $v_{i_s}^{j_s}\in D$; by construction this forces $v_\ell v_{i_s}\in E(G)$. Thus $D_1$ dominates $G$, so $\gamma(G)\leq |D_1|\leq t=\gamma(G\bigodot m)$.
\end{proof}

\begin{lemma}\label{Lemma:TD(GammaI)EqualToTD(GammaOm)}
$\gamma_t(G\bigodot m) = \gamma_t(G)$.
\end{lemma}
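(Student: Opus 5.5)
I will prove the equality through the two inequalities $\gamma_t(G\bigodot m)\le \gamma_t(G)$ and $\gamma_t(G)\le \gamma_t(G\bigodot m)$. Throughout I use the projection $\pi: V(G\bigodot m)\to V(G)$, $\pi(v_i^s)=v_i$. By Definition~\ref{def:mult-of-vertices} it satisfies $x\sim y$ in $G\bigodot m$ if and only if $\pi(x)\sim\pi(y)$ in $G$. Since $G$ is connected with at least two vertices, it has no isolated vertices, so $\gamma_t(G)$ is defined. The same is then true of $G\bigodot m$, because every $v_i^s$ is adjacent to all of $V_j$ for each neighbour $v_j$ of $v_i$.

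\emph{Upper bound.} Let $S=\{v_{i_1},\ldots,v_{i_t}\}$ be a minimum total dominating set of $G$. Lift it to $\widetilde S=\{v_{i_1}^1,\ldots,v_{i_t}^1\}$, which has exactly $t$ elements. Take any vertex $x=v_\ell^s$ of $G\bigodot m$. Since $S$ is total dominating, $v_\ell$ has a neighbour $v_{i_k}\in S$, and this holds whether or not $v_\ell\in S$. By the adjacency rule, $x\sim v_{i_k}^1$. Hence $\widetilde S$ is a total dominating set, and $\gamma_t(G\bigodot m)\le\gamma_t(G)$.

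\emph{Lower bound.} This mirrors the proof of Lemma~\ref{Lemma;Dom(GOm)geDom(G)}. Let $D$ be a minimum total dominating set of $G\bigodot m$ and set $D_1=\pi(D)$, so $|D_1|\le |D|$. Take any $v_\ell\in V(G)$. The vertex $v_\ell^1$ has a neighbour $y\in D$, so $v_\ell\sim \pi(y)\in D_1$. This argument also covers $v_\ell\in D_1$, so $D_1$ totally dominates $G$. Thus $\gamma_t(G)\le|D_1|\le\gamma_t(G\bigodot m)$.

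Neither step is genuinely hard. The one point needing care is that projection may collapse several vertices of $D$ into one, so $D_1$ can be strictly smaller than $D$. This is harmless, since it only strengthens the inequality. The other point is that, because parts are independent, a vertex of $D$ is never dominated from inside its own part. So the total-domination witness for $v_\ell$ always projects to a genuine neighbour in $G$, and the equivalence ``$x\sim y\iff\pi(x)\sim\pi(y)$'' handles this uniformly.
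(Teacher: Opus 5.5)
Your proof is correct and follows the paper's argument. For the upper bound you lift a minimum total dominating set of $G$ to the first copies $v_i^1$, and for the lower bound you project a minimum total dominating set of $G\bigodot m$ onto the set of parts it meets, exactly as the paper does; applying total domination to $v_\ell^1$ for every $v_\ell$ via $x\sim y\iff\pi(x)\sim\pi(y)$ is only a slightly cleaner packaging of the paper's case check.
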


\begin{proof}
Let $D$ be a minimum total dominating set of $G$, and set $D'=\{v_i^1 : v_i\in D\}$. For any vertex $v_j^s$ of $G\bigodot m$, the vertex $v_j$ has a neighbor $v_i\in D$ (as $D$ totally dominates $G$), and by construction $v_j^s\sim v_i^1$. Hence $D'$ totally dominates $G\bigodot m$, giving $\gamma_t(G\bigodot m)\leq |D'|=\gamma_t(G)$.

Let $D$ be a minimum total dominating set of $G\bigodot m$, and set $D_1=\{v_i : V_i\cap D\neq\emptyset\}\subseteq V(G)$. Since each $V_i$ is independent, every vertex of $D$ has its totally-dominating neighbor in some distinct part $V_j$; thus for each $v_i\in D_1$ there is $v_j\in D_1$ with $v_iv_j\in E(G)$, and moreover every $v_i\in V(G)$ satisfies this same property by the domination of $G\bigodot m$. Hence $D_1$ is a total dominating set of $G$, giving $\gamma_t(G)\leq |D_1|\leq |D|=\gamma_t(G\bigodot m)$.
Combining both inequalities completes the proof.
\end{proof}
\begin{lemma}\label{lemma:dn=tdn-implydn=dn-inmultiplication}
If $\gamma(G)=\gamma_t(G)$, then $\gamma(G)=\gamma(G\bigodot m)$.
\end{lemma}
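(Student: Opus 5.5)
The plan is to sandwich $\gamma(G\bigodot m)$ between two quantities that both equal $\gamma(G)$. The lower bound $\gamma(G\bigodot m)\ge\gamma(G)$ is exactly Lemma~\ref{Lemma;Dom(GOm)geDom(G)}, so it only remains to prove $\gamma(G\bigodot m)\le\gamma(G)$ under the hypothesis $\gamma(G)=\gamma_t(G)$.

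For the upper bound I will use the total domination number rather than ordinary domination. Since every total dominating set is a dominating set, $\gamma(G\bigodot m)\le\gamma_t(G\bigodot m)$. By Lemma~\ref{Lemma:TD(GammaI)EqualToTD(GammaOm)}, $\gamma_t(G\bigodot m)=\gamma_t(G)$. By hypothesis $\gamma_t(G)=\gamma(G)$. Chaining these gives
\[
\gamma(G)\le\gamma(G\bigodot m)\le\gamma_t(G\bigodot m)=\gamma_t(G)=\gamma(G),
\]
so equality holds throughout. As a byproduct, we also get $\gamma(G\bigodot m)=\gamma_t(G\bigodot m)$, which is presumably what will be used next.

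The only point needing care is why the hypothesis is necessary. A minimum dominating set $D$ of $G$ need not dominate $G\bigodot m$ when lifted to $\{v_i^1 : v_i\in D\}$: the copies $v_i^s$ with $s\ge 2$ of a vertex $v_i\in D$ are undominated unless $v_i$ has a neighbour in $D$. Total domination avoids this problem, and that is why the proof routes through $\gamma_t$. There is also a silent requirement that $G$ have no isolated vertices, so that $\gamma_t$ is defined. This holds because $G$ is connected and has $\gamma_t(G)=\gamma(G)$ finite; the trivial case $G=K_1$ is excluded because $\gamma_t$ is then undefined.
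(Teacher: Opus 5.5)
Your proof is correct and is essentially the paper's argument. The paper gets the upper bound by showing directly that a minimum total dominating set of $G$, lifted to $G\bigodot m$, dominates $G\bigodot m$; that is exactly the first half of the proof of Lemma~\ref{Lemma:TD(GammaI)EqualToTD(GammaOm)}, which you invoke. It gets the lower bound by projecting a dominating set of $G\bigodot m$ back to $G$. Your citation of Lemma~\ref{Lemma;Dom(GOm)geDom(G)} for the lower bound is a sounder justification than the paper's remark that $G$ is an induced subgraph of $G\bigodot m$. Domination number is not monotone under induced subgraphs in general ($K_{1,3}$ versus its three leaves), so what actually makes the inequality hold is the twin structure of the parts $V_i$.
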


\begin{proof}
Let $D$ be a minimum total dominating set of $G$. Since $D$ is a total
dominating set, for every $u\in D$, there exists $v\in D$ such that
$u\sim v$. By the construction of $G\bigodot m$, it follows that $D$
dominates $G\bigodot m$ for every $m=(m_1,\ldots,m_n)$. Hence
$\gamma(G\bigodot m)\leq |D|=\gamma_t(G)=\gamma(G).$
On the other hand, since $G$ is an induced subgraph of
$G\bigodot m$, every dominating set of $G\bigodot m$ induces a
dominating set of $G$. Therefore
$\gamma(G)\leq\gamma(G\bigodot m).$
Consequently,
$\gamma(G)=\gamma(G\bigodot m),$
as required.
\end{proof}

\begin{lemma}\label{lemma:dn=1-tdn=dn=2-iff}
Suppose, in addition, that $v_i$ is a dominating vertex of $G$. Then
$\gamma(G\bigodot m)=\gamma_t(G\bigodot m)=2$ if and only if $m_i\geq 2$.
\end{lemma}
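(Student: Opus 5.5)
The plan is to separate the total domination number, which Lemma~\ref{Lemma:TD(GammaI)EqualToTD(GammaOm)} already controls, from the ordinary domination number, which then reduces to asking whether $G\bigodot m$ has a dominating vertex. Throughout, $G$ is connected with $n\ge 2$. Since $v_i$ is a dominating vertex, $\gamma(G)=1$. Moreover, $\{v_i,v_j\}$ is a total dominating set for any neighbour $v_j$ of $v_i$, while a single vertex is never totally dominating, so $\gamma_t(G)=2$. By Lemma~\ref{Lemma:TD(GammaI)EqualToTD(GammaOm)}, $\gamma_t(G\bigodot m)=2$ for every $m$. Combined with $\gamma(G\bigodot m)\le\gamma_t(G\bigodot m)=2$, this reduces the statement to
\[
\gamma(G\bigodot m)=2 \iff G\bigodot m \text{ has no dominating vertex}.
\]

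\emph{Necessity ($m_i\ge 2$).} If $m_i=1$, then $v_i^1$ is adjacent to every vertex of every part $V_j$ with $j\neq i$, since $v_iv_j\in E(G)$. So $v_i^1$ dominates $G\bigodot m$ and $\gamma(G\bigodot m)=1\neq 2$.

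\emph{Sufficiency.} Assume $m_i\ge 2$ and suppose some $x\in V_j$ dominates $G\bigodot m$.
\begin{itemize}
\item If $j=i$, then the other vertices of $V_i$ lie in the same independent part as $x$, so $x$ does not dominate them, a contradiction.
\item If $j\ne i$, then $x$ must be adjacent to every vertex outside $V_j$ and $V_j=\{x\}$. This forces $v_j$ to be adjacent to all other vertices of $G$ and $m_j=1$.
\end{itemize}

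The main obstacle is exactly the second case. The argument only closes if $G$ has no dominating vertex other than $v_i$, or more precisely, no dominating vertex $v_j$ with $m_j=1$. Without such a hypothesis the statement fails: for $G=K_2$ and $m=(2,1)$ we get $G\bigodot m=K_{1,2}$, where $\gamma=1$ and $\gamma_t=2$. I would therefore prove the lemma under the reading that $v_i$ is the unique dominating vertex of $G$, and remark that the general correct condition is that $m_j\ge 2$ for every dominating vertex $v_j$ of $G$. Under that hypothesis the case $j\ne i$ contradicts uniqueness, so $G\bigodot m$ has no dominating vertex and $\gamma(G\bigodot m)=2=\gamma_t(G\bigodot m)$.
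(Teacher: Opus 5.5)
Your analysis is correct, and more careful than the paper's own proof. The lemma as literally stated is false: with $G=K_2$, $v_i=v_1$ and $m=(2,1)$ one gets $K_{1,2}$, where $\gamma=1$.

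The paper's sufficiency argument is the single sentence ``since $v_i$ is a dominating vertex of $G$, it follows that $\gamma(G\bigodot m)=2$''. That is exactly the step you showed fails when some other dominating vertex $v_j$ has $m_j=1$. The paper's converse also relies on the false claim that $m_i=1$ forces $G\bigodot m=G$. Your observation that $v_i^1$ is still adjacent to every vertex of every other part is the correct justification of that (true) direction.

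Otherwise your route is the paper's. You get $\gamma_t(G\bigodot m)=\gamma_t(G)=2$ from Lemma~\ref{Lemma:TD(GammaI)EqualToTD(GammaOm)}, and everything reduces to whether $G\bigodot m$ has a dominating vertex. Your case analysis supplies the missing step and gives the exact criterion: a vertex of $V_j$ dominates $G\bigodot m$ if and only if $v_j$ dominates $G$ and $m_j=1$. So the correct lemma reads ``$\gamma(G\bigodot m)=\gamma_t(G\bigodot m)=2$ if and only if $m_j\ge 2$ for every dominating vertex $v_j$ of $G$'', and your proof of it is complete.

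This correction propagates to later results. $K_3\bigodot(2,1,1)$ and $K_4\bigodot(2,1,1,1)$ still contain a dominating vertex. So do blow-ups of $D_{21}$ or $D_{24}\cong K_5$ in which only one dominating vertex is multiplied. Hence the ``at least one index'' conditions must be strengthened to ``every dominating vertex'' in Lemma~\ref{lemma:rank-3-dn=tdn=2-iff}, Propositions~\ref{prop:rank-4-d-td=2} and~\ref{prop:rank-5-d-td=2}, and Theorem~\ref{thm:girth-3-rank-2-5}.
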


\begin{proof}
Suppose first that $m_i\geq 2$. Since $v_i$ is a dominating vertex of $G$, it follows that $\gamma(G\bigodot m)=2$. Moreover, by Lemma~\ref{Lemma:TD(GammaI)EqualToTD(GammaOm)}, we have $\gamma_t(G\bigodot m)=2$.

Conversely, suppose that $\gamma(G\bigodot m)=\gamma_t(G\bigodot m)=2$. If $m_i=1$, then $G\bigodot m=G$, and hence $v_i$ is a dominating vertex of $G\bigodot m$. This implies that $\gamma(G\bigodot m)=1$, contradicting the assumption that $\gamma(G\bigodot m)=2$. Therefore, $m_i\geq 2$.
\end{proof}

\begin{lemma}[\cite{Nulity-of-graphs-ELA,Nulity-bicyclic-graph,Rank-of-graphs-Sc}]\label{lemma: cha of graphs rank leq 3}
	Let $r(G)$ be the rank of $G.$ Then 
	\begin{enumerate}
		\item $r(G)=2$ if and only if $G\in \mathcal{M}(K_2)$
		\item $r(G)=3$ if and only if $G\in \mathcal{M}(K_3).$
	\end{enumerate}	
\end{lemma}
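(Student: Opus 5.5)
The statement to prove is the cited characterization: a graph (implicitly connected, or at least without isolated vertices) has rank $2$ exactly when it is a blow-up of $K_2$, and rank $3$ exactly when it is a blow-up of $K_3$. I would prove both parts in the same three steps.

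\textbf{Step 1: blow-ups preserve rank.} Write $A$ for the adjacency matrix of $G$. Under multiplication of vertices, every vertex of the part $V_i$ of $G\bigodot m$ has the row of $v_i$ in $A$, repeated according to $m$. So the rows of $A(G\bigodot m)$ are copies of the rows of $A$, and similarly for columns; deleting the duplicate rows and columns recovers $A$. Hence $r(G\bigodot m)=r(G)$. Since $r(K_2)=2$ and $r(K_3)=3$ (the determinant of $A(K_3)$ is $2\neq 0$), this gives the ``if'' direction of both parts.

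\textbf{Step 2: reduction to the twin-free case.} Call $u,w$ \emph{twins} if $N(u)=N(w)$. Twins are nonadjacent, since a vertex is never its own neighbor. The twin relation is an equivalence relation, and its classes are independent sets with identical neighborhoods. Choose one vertex from each class and let $H$ be the induced subgraph on these representatives. Then $G=H\bigodot m$, where $m$ records the class sizes. By Step 1, $r(H)=r(G)$. Moreover $H$ is connected and twin-free, so its adjacency matrix has pairwise distinct rows. It therefore suffices to show: a connected twin-free graph $H$ of rank $2$ is $K_2$, and one of rank $3$ is $K_3$.

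\textbf{Step 3: the twin-free case.} This is the main obstacle. I would use the standard fact that rank of a symmetric matrix is witnessed by a principal submatrix: if $r(H)=k$, there is an induced subgraph $F$ on $k$ vertices with $A(F)$ nonsingular, and every row of $A(H)$ is a linear combination of the rows indexed by $V(F)$.

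For $k=2$, the only nonsingular graph on two vertices is $K_2$, say on $x,y$. Consider any other vertex $z$ and restrict the linear combination to the columns $x$ and $y$.
\begin{itemize}
\item If $z$ is adjacent to neither $x$ nor $y$, the coefficients vanish, so the row of $z$ is zero and $z$ is isolated. This is excluded.
\item Otherwise, the row of $z$ is a $0/1$ combination of the rows of $x$ and $y$. That forces it to equal row $x$ or row $y$ (so $z$ is a twin), or to be their sum. The sum is impossible: row $x$ plus row $y$ has ones at both $x$ and $y$, which would put $x,y,z$ in a triangle, and then the $3\times 3$ principal minor on $\{x,y,z\}$ is nonzero, contradicting rank $2$.
\end{itemize}
So $H=K_2$.

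For $k=3$, a nonsingular $3$-vertex graph is $K_3$ (the path $P_3$ has determinant $0$). Let $F$ be this triangle. Each additional vertex $z$ has a row that is a combination $\sum_{w\in F} c_w\,\text{row}_w$. Restricting to the columns of $F$ gives a linear system whose coefficient matrix is $A(K_3)$, which is invertible; I would solve this system for each of the possible neighborhoods $N(z)\cap F$. Then I would check two things: the entry at $z$ itself must be $0$, and for any other vertex $z'$ the entry $A_{zz'}$ must come out $0$ or $1$ consistently. I expect these conditions to force every $z$ to be a twin of a vertex of $F$, and any vertex with no neighbor in $F$ to be isolated. That gives $H=K_3$.

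This sign and integrality case analysis is routine but fiddly, and it is where I expect the real work to lie. Alternatively, one can simply cite \cite{Nulity-of-graphs-ELA,Nulity-bicyclic-graph,Rank-of-graphs-Sc}, as the paper does.
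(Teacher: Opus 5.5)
Your proof is correct, and it takes a different route from the paper, which gives no proof at all and simply cites the rank/nullity literature. You give a self-contained linear-algebra argument in three steps:
\begin{itemize}
\item rank is invariant under blow-up, because $A(G\bigodot m)$ arises from $A(G)$ by duplicating rows and columns;
\item every connected graph is the blow-up of its twin-free quotient;
\item a symmetric matrix of rank $k$ has a nonsingular $k\times k$ principal submatrix whose rows span the row space.
\end{itemize}
The citation buys brevity. Your route makes explicit that the base graphs in the $\mathcal{M}(\cdot)$ lists are the twin-free (reduced) graphs of the given rank. It is also the natural template for the rank-$4$ and rank-$5$ classifications the paper invokes later.

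The only place you stop short is the rank-$3$ case, where you write ``I expect'' instead of finishing. It closes in one line, and the diagonal condition alone suffices. Let $F$ be the triangle on $x_1,x_2,x_3$, so $A(F)=J-I$ with $J$ the all-ones matrix. Let $b\in\{0,1\}^3$ be the indicator vector of $N(z)\cap V(F)$ and $c$ the coefficient vector. Restricting to the columns of $F$ gives $(J-I)c=b$. Since $(J-I)^{-1}=\tfrac12 J-I$, this gives $c=\tfrac{|b|}{2}\mathbf{1}-b$. Then
\[
0=A_{zz}=\sum_{i} c_i A_{x_i z}=c^{T}b=\frac{|b|\,(|b|-2)}{2},
\]
which forces $|b|\in\{0,2\}$.
\begin{itemize}
\item If $|b|=0$, then $c=0$ and $z$ is isolated.
\item If $|b|=2$, then $c=\mathbf{1}-b=e_i$, where $x_i$ is the unique vertex of $F$ not adjacent to $z$. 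So $z$ is a twin of $x_i$.
\end{itemize}
Both outcomes are excluded in a connected twin-free $H$, so $H=K_3$. The consistency check on the entries $A_{zz'}$ that you planned is unnecessary. The same computation with $A(K_2)^{-1}=A(K_2)$ gives $A_{zz}=2b_xb_y$, which is exactly your $k=2$ argument.
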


\begin{lemma}\label{lemma:rank-2-dn=tdn=2-iff}
Let $G\in\mathcal{M}(K_2)$. Then
$\gamma(G)=\gamma_t(G)=2$
if and only if
$G=K_2\bigodot m,$
where $m=(m_1,m_2)$ with $m_1,m_2\geq 2$.
\end{lemma}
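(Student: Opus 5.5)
Since $G\in\mathcal{M}(K_2)$, we may write $G=K_2\bigodot m$ with $m=(m_1,m_2)$, so $G$ is the complete bipartite graph $K_{m_1,m_2}$ with parts $V_1$ and $V_2$. The plan is to compute $\gamma_t(G)$ and $\gamma(G)$ directly from the earlier lemmas, and then check exactly when $\gamma(G)=2$.

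\emph{Total domination.} Note $\gamma_t(K_2)=2$, since the two vertices of $K_2$ totally dominate each other. By Lemma~\ref{Lemma:TD(GammaI)EqualToTD(GammaOm)}, $\gamma_t(G)=\gamma_t(K_2)=2$ for every $m$. So the condition $\gamma_t(G)=2$ is automatic, and the statement reduces to showing that $\gamma(G)=2$ if and only if $m_1,m_2\ge 2$.

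\emph{Domination.} The case analysis is routine. Suppose $m_1\ge 2$ and $m_2\ge 2$.
\begin{itemize}
\item No single vertex dominates $G$. A vertex of $V_1$ is not adjacent to the other vertices of $V_1$, because $V_1$ is independent and $|V_1|\ge 2$. The same holds for a vertex of $V_2$. Hence $\gamma(G)\ge 2$.
\item On the other hand, $\gamma(K_2)=\gamma_t(K_2)=2$ fails, since $\gamma(K_2)=1$. So Lemma~\ref{lemma:dn=tdn-implydn=dn-inmultiplication} does not apply. Instead, observe directly that $\{v_1^1,v_2^1\}$ dominates $G$, because every vertex of $V_2$ is adjacent to $v_1^1$ and every vertex of $V_1$ is adjacent to $v_2^1$. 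Hence $\gamma(G)=2$.
\end{itemize}
Conversely, suppose some $m_i=1$, say $m_1=1$. Then the unique vertex of $V_1$ is adjacent to all of $V_2$, so it is a dominating vertex and $\gamma(G)=1\neq 2$. This is also exactly the content of Lemma~\ref{lemma:dn=1-tdn=dn=2-iff}, applied to the dominating vertex $v_1$ of $K_2$, and symmetrically to $v_2$.

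\emph{Combining.} Lemma~\ref{lemma:dn=1-tdn=dn=2-iff}, applied to both dominating vertices of $K_2$, yields the necessity of $m_1\ge2$ and of $m_2\ge2$. The sufficiency needs the argument above that no single vertex dominates $G$ when both parts have size at least $2$. The only subtlety, and hence the main point to check, is that sufficiency needs \emph{both} $m_i\ge 2$. Lemma~\ref{lemma:dn=1-tdn=dn=2-iff} as stated, with only $m_i\ge2$, gives $\gamma(G)=2$ only when no other vertex of the base graph remains dominating. This is why the direct independence argument is used in place of that lemma in the sufficiency direction.
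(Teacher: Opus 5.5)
Your proof is correct and follows essentially the same route as the paper: you get $\gamma_t(G)=2$ for every $m$ from Lemma~\ref{Lemma:TD(GammaI)EqualToTD(GammaOm)}, you use the dominating vertex that appears when some $m_i=1$, and you take one vertex from each part when $m_1,m_2\ge 2$. Your explicit independence argument that no single vertex dominates is more careful than the paper's ``$G$ is not complete, hence $\gamma(G)=2$,'' and you are right not to rely on the sufficiency direction of Lemma~\ref{lemma:dn=1-tdn=dn=2-iff}, which fails for $K_2\bigodot(2,1)\cong P_3$.
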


\begin{proof}
Suppose first that
$G=K_2\bigodot m,
 m=(m_1,m_2).$
If $m_i=1$ for some $i\in\{1,2\}$, then $G$ has a dominating
vertex, and hence
$\gamma(G)=1.$
On the other hand, by Lemma~\ref{Lemma:TD(GammaI)EqualToTD(GammaOm)},
every graph in $\mathcal{M}(K_2)$ has total domination number $2.$ So, in this case
$\gamma(G)\neq\gamma_t(G).$

Now suppose that $m_1, m_2\geq2$. Then $G$ is a complete bipartite
graph with at least two vertices in each part. In particular, $G$ is
not complete, and hence
$\gamma(G)=2.$
Moreover, choosing one vertex from each part gives a total dominating
set of cardinality $2$. Consequently,
$\gamma_t(G)=2.$
Therefore,
$\gamma(G)=\gamma_t(G)=2.$

Conversely, let $G\in\mathcal{M}(K_2)$ and suppose that
$\gamma(G)=\gamma_t(G)=2.$
If $m_i=1$ for some $i\in\{1,2\}$, then $G$ has a dominating vertex,
which would imply $\gamma(G)=1$, a contradiction. Hence
$m_1,m_2\geq2$. Therefore,
$G=K_2\bigodot(m_1,m_2),
m_1,m_2\geq2.$
This completes the proof.
\end{proof}

\begin{lemma}\label{lemma:rank-3-dn=tdn=2-iff}
Let $G\in\mathcal{M}(K_3)$. Then $\gamma(G)=\gamma_t(G)=2$ if and only if
$G=K_3\bigodot m,$
where $m=(m_1,m_2,m_3)$ and $\max\{m_1,m_2,m_3\}\geq2$.
\end{lemma}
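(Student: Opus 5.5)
The plan is to reduce everything to the two earlier lemmas about blow-ups, Lemma~\ref{Lemma:TD(GammaI)EqualToTD(GammaOm)} and Lemma~\ref{lemma:dn=1-tdn=dn=2-iff}, using the fact that in $K_3$ every vertex is dominating. First, since $\gamma_t(K_3)=2$, Lemma~\ref{Lemma:TD(GammaI)EqualToTD(GammaOm)} gives $\gamma_t(K_3\bigodot m)=2$ for every $m$. So the whole content of the statement is the domination part: deciding when $\gamma(K_3\bigodot m)=2$.

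Second, I will get the upper bound on $\gamma$. Since $\gamma(K_3)=1<\gamma_t(K_3)$, Lemma~\ref{lemma:dn=tdn-implydn=dn-inmultiplication} does not apply directly. Instead I will note that any two adjacent vertices $x\in V_i$ and $y\in V_j$ with $i\neq j$ dominate $K_3\bigodot m$. Hence $\gamma\le 2$ always, and $\gamma=2$ exactly when $K_3\bigodot m$ has no dominating vertex.

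Third, for the ``if'' direction I will fix an index $i$ with $m_i\ge 2$, which the hypothesis $\max\{m_1,m_2,m_3\}\ge 2$ provides. Since $v_i$ is a dominating vertex of $K_3$, Lemma~\ref{lemma:dn=1-tdn=dn=2-iff} then yields $\gamma(K_3\bigodot m)=\gamma_t(K_3\bigodot m)=2$. For the ``only if'' direction, if $m=(1,1,1)$ then $G=K_3$ and $\gamma(G)=1$, which is a contradiction. So some $m_i\ge 2$.

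The main obstacle is the second step in the ``if'' direction: checking that no vertex of $K_3\bigodot m$ is dominating. A vertex $v_j^s$ misses exactly the other vertices of its own part $V_j$. So $v_j^s$ is dominating precisely when $m_j=1$. This means the application of Lemma~\ref{lemma:dn=1-tdn=dn=2-iff} has to be checked carefully against every index $j$, not only the chosen $i$. In particular, for $m=(2,1,1)$ the vertex $v_2^1$ is adjacent to all of $V_1\cup V_3$. Thus $K_3\bigodot(2,1,1)=K_4-e$ has $\gamma=1$, not $2$.

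So this direct verification shows that the maximum condition cannot yield $\gamma=2$ as worded. The argument goes through exactly when no part is a singleton, i.e.\ under $\min\{m_1,m_2,m_3\}\ge 2$ rather than $\max$. I expect this gap to be the point to resolve before finalizing: either by strengthening the hypothesis on $m$ to $\min\{m_1,m_2,m_3\}\ge 2$, or by correcting the ``if'' direction of Lemma~\ref{lemma:dn=1-tdn=dn=2-iff} on which the plan relies, since that lemma has the same gap.
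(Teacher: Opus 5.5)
Your proposal is correct, and it finds a genuine error in the statement itself, not in your reasoning. $K_3\bigodot m$ is the complete tripartite graph $K_{m_1,m_2,m_3}$. A vertex of $V_j$ is non-adjacent only to the other vertices of $V_j$, so it is a dominating vertex exactly when $m_j=1$.

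Your example settles it. $K_3\bigodot(2,1,1)\cong K_4-e$ satisfies $\max\{m_1,m_2,m_3\}\ge 2$, yet $\gamma=1\ne 2=\gamma_t$. So the ``if'' direction is false. The paper's proof breaks at the sentence ``In this case, $G$ is not complete, and hence $\gamma(G)=2$'': a non-complete graph can still have a dominating vertex. Contrast Lemma~\ref{lemma:rank-2-dn=tdn=2-iff}, where the paper correctly requires both $m_1,m_2\ge 2$. The ``only if'' direction as stated is true, but it is weaker than what actually holds.

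Your ingredients prove the corrected statement without the flawed lemma:
\begin{itemize}
\item $\gamma_t(K_3\bigodot m)=2$ for every $m$, by Lemma~\ref{Lemma:TD(GammaI)EqualToTD(GammaOm)}.
\item Any two vertices in different parts are adjacent and dominate everything, so $\gamma\le 2$.
\item $\gamma=2$ exactly when there is no singleton part.
\end{itemize}
Hence $\gamma(G)=\gamma_t(G)=2$ if and only if $\min\{m_1,m_2,m_3\}\ge 2$. In your write-up, replace the appeal to Lemma~\ref{lemma:dn=1-tdn=dn=2-iff} in your third step with this direct singleton-part check.

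You are also right that the ``if'' half of Lemma~\ref{lemma:dn=1-tdn=dn=2-iff} has the same defect whenever the base graph has more than one dominating vertex. The correct hypothesis is $m_j\ge 2$ for every dominating vertex $v_j$. The error propagates into item (i) of Theorem~\ref{thm:girth-3-rank-2-5} and into the $K_4$ case of Proposition~\ref{prop:rank-4-d-td=2}. It also affects base graphs in item (v) with several dominating vertices; for example, $D_{24}$ as drawn is $K_5$.
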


\begin{proof}
Suppose first that $G=K_3\bigodot m$, where $m=(m_1,m_2,m_3)$.

If $m_1=m_2=m_3=1$, then $G\cong K_3$. Consequently,
$\gamma(G)=1$ and $\gamma_t(G)=2$, and hence
$\gamma(G)\neq\gamma_t(G)$.

Now assume that $\max\{m_1,m_2,m_3\}\geq2$. In this case, $G$ is not complete, and hence $\gamma(G)=2$. Moreover, by Lemma~\ref{Lemma:TD(GammaI)EqualToTD(GammaOm)}, the graph $G$ admits a total dominating set of cardinality $2$. Thus $\gamma_t(G)=2$, and consequently
$\gamma(G)=\gamma_t(G)=2$.

Conversely, suppose that $G\in\mathcal{M}(K_3)$ and
$\gamma(G)=\gamma_t(G)=2$. If $m_1=m_2=m_3=1$, then
$G\cong K_3$, which implies $\gamma(G)=1$, a contradiction.
Therefore, $\max\{m_1,m_2,m_3\}\geq2$. Hence
$G=K_3\bigodot m$, where $m=(m_1,m_2,m_3)$ satisfies
$\max\{m_1,m_2,m_3\}\geq2$.
\end{proof}

\begin{proposition}\label{prop:rank-2,3-d-td=2}
Let $G$ be a graph with $g(G)=3$ and $r(G)\in\{2,3\}$. Then
$\gamma(G)=\gamma_t(G)=2$ if and only if
$G=K_3\bigodot m$, where $m=(m_1,m_2,m_3)$ with $\max\{m_1,m_2,m_3\}\geq2$.
\end{proposition}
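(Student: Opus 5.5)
The plan is to combine the rank classification of Lemma~\ref{lemma: cha of graphs rank leq 3} with the girth invariance of Lemma~\ref{lemma:girth-blowup}, and then apply Lemma~\ref{lemma:rank-3-dn=tdn=2-iff}.

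First, we eliminate rank $2$. If $r(G)=2$, then $G\in\mathcal{M}(K_2)$ by Lemma~\ref{lemma: cha of graphs rank leq 3}, so $G=K_2\bigodot m$ for some $m$. Since $g(K_2)\neq 3$ (indeed $K_2$ is acyclic), Lemma~\ref{lemma:girth-blowup} gives $g(G)\neq 3$. This contradicts the hypothesis $g(G)=3$, so we must have $r(G)=3$. One can also see this directly: every graph in $\mathcal{M}(K_2)$ is complete bipartite and therefore triangle-free.

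Next, we treat rank $3$. By Lemma~\ref{lemma: cha of graphs rank leq 3}, $G\in\mathcal{M}(K_3)$, so $G=K_3\bigodot m$ for some $m=(m_1,m_2,m_3)$. Lemma~\ref{lemma:rank-3-dn=tdn=2-iff} then says that $\gamma(G)=\gamma_t(G)=2$ holds exactly when $\max\{m_1,m_2,m_3\}\ge 2$. This proves the forward implication.

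For the converse, let $G=K_3\bigodot m$ with $\max m_i\ge 2$. We have $g(G)=3$ by Lemma~\ref{lemma:girth-blowup}, since $g(K_3)=3$. We also have $r(G)=3$ by Lemma~\ref{lemma: cha of graphs rank leq 3}, so $G$ satisfies the hypotheses of the statement. Lemma~\ref{lemma:rank-3-dn=tdn=2-iff} then gives $\gamma(G)=\gamma_t(G)=2$.

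The only point requiring care is ruling out rank $2$, and that step is immediate from the girth lemma. The main obstacle is therefore bookkeeping: checking that $K_3\bigodot m$ indeed has rank $3$. This follows from the characterization in Lemma~\ref{lemma: cha of graphs rank leq 3}.
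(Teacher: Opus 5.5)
Your reduction is the same as the paper's. Rank-$2$ graphs are complete bipartite, hence triangle-free, so only $\mathcal{M}(K_3)$ survives, and everything is then delegated to Lemma~\ref{lemma:rank-3-dn=tdn=2-iff}. The forward direction is fine. The converse step ``Lemma~\ref{lemma:rank-3-dn=tdn=2-iff} then gives $\gamma(G)=\gamma_t(G)=2$'' fails, because the sufficiency half of that lemma is false: its proof infers $\gamma(G)=2$ merely from $G$ not being complete.

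The real difficulty is not the rank bookkeeping but the domination computation. In $K_3\bigodot m$, every vertex of $V_i$ is adjacent to every vertex outside $V_i$. So if some $m_i=1$, the unique vertex of $V_i$ is a dominating vertex and $\gamma(G)=1$. Concretely, $G=K_3\bigodot(2,1,1)=K_{2,1,1}$ (the diamond $K_4-e$) has $g(G)=3$, $r(G)=3$ and $\max\{m_1,m_2,m_3\}=2$, yet $\gamma(G)=1<2=\gamma_t(G)$. So the statement as written cannot be proved. The paper's own proof has the same defect, since it also just invokes Lemma~\ref{lemma:rank-3-dn=tdn=2-iff}.

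The correct characterization replaces $\max\{m_1,m_2,m_3\}\ge 2$ by $\min\{m_1,m_2,m_3\}\ge 2$, and you can prove it directly.
\begin{itemize}
\item If some $m_i=1$, you get a dominating vertex as above, so $\gamma(G)=2$ forces all $m_i\ge 2$.
\item If all $m_i\ge 2$, every vertex has a non-neighbour in its own part, so no vertex is dominating and $\gamma(G)\ge 2$.
\item Also, for $x\in V_1$ and $y\in V_2$, the adjacent pair $\{x,y\}$ is a total dominating set. Every vertex, including $x$ and $y$, lies outside $V_1$ or outside $V_2$, so it is adjacent to $x$ or to $y$. Hence $\gamma(G)=\gamma_t(G)=2$.
\end{itemize}
Your rank-$2$ elimination and your check that $K_3\bigodot m$ has girth $3$ and rank $3$ can be kept unchanged.
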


\begin{proof}
By Lemma~\ref{lemma: cha of graphs rank leq 3}, the graphs of rank
$2$ and $3$ are characterized by the families described therein. In
particular, every graph in $\mathcal{M}(K_2)$ is complete bipartite
and hence has girth $4$. Thus, no graph in $\mathcal{M}(K_2)$ satisfies
the condition $g(G)=3$.

It remains to consider graphs in $\mathcal{M}(K_3)$. The desired
characterization then follows immediately from
Lemma~\ref{lemma:rank-3-dn=tdn=2-iff}.
\end{proof}

\begin{figure}
		\tiny
		\tikzstyle{ver}=[]
		\tikzstyle{vert}=[circle, draw, fill=black!100, inner sep=0pt, minimum width=4pt]
		\tikzstyle{vertex}=[circle, draw, fill=black!00, inner sep=0pt, minimum width=4pt]
		\tikzstyle{edge} = [draw,thick,-]
		\tikzstyle{node_style} = [circle,draw=blue,fill=blue!20!,font=\sffamily\Large\bfseries]
		\centering
		\begin{tikzpicture}[scale=1]
		\tikzstyle{edge_style} = [draw=black, line width=2mm, ]
		\tikzstyle{node_style} = [draw=blue,fill=blue!00!,font=\sffamily\Large\bfseries]
		%
		%
		%
		\draw[line width=.2 mm] (0,-4) -- (1,-4);
		\draw[line width=.2 mm] (1,-4) -- (.5,-3);
		\draw[line width=.2 mm] (0,-4) -- (.5,-3);
		\draw[line width=.2 mm] (.5,-2) -- (.5,-3);
		\fill[black!100!](0,-4) circle (.07);
		\fill[black!100!](1,-4) circle (.07);
		\fill[black!100!](.5,-3) circle (.07);
		\fill[black!100!](.5,-2) circle (.07);
		\node (A4) at (0.5,-4.5)  {$\bf{B_1}$};
		\draw[line width=.2 mm] (2.5,-2) -- (3,-3);
		\draw[line width=.2 mm] (3.5,-4) -- (3,-3);
		\draw[line width=.2 mm] (3.5,-4) -- (4,-3);
		\draw[line width=.2 mm] (4,-3) -- (4.5,-2);
		\draw[line width=.2 mm] (3,-3) -- (4,-3);
		\fill[black!100!](2.5,-2) circle (.07);
		\fill[black!100!](3.5,-4) circle (.07);
		\fill[black!100!](3,-3) circle (.07);
		\fill[black!100!](4,-3) circle (.07);
		\fill[black!100!](4.5,-2) circle (.07);
		\node (A4) at (3.5,-4.5)  {$\bf{B_2}$};
		\draw[line width=.2 mm] (6,-2.5) -- (6,-1.67);
		\draw[line width=.2 mm] (6,-2.5) -- (5.5,-3);
		\draw[line width=.2 mm] (6,-2.5) -- (6.5,-3);
		\draw[line width=.2 mm] (5.5,-4) -- (6.5,-4);
		\draw[line width=.2 mm] (5.5,-4) -- (5.5,-3);
		\draw[line width=.2 mm] (6.5,-4) -- (6.5,-3);
		\draw[line width=.2 mm] (5.5,-3) -- (6.5,-3);
		\fill[black!100!](6,-1.67) circle (.07);
		\fill[black!100!](6,-2.5) circle (.07);
		\fill[black!100!](5.5,-4) circle (.07);
		\fill[black!100!](5.5,-3) circle (.07);
		\fill[black!100!](6.5,-3) circle (.07);
		\fill[black!100!](6.5,-4) circle (.07);
		\node (A4) at (5.9,-4.5)  {$\bf{B_3}$};
        \begin{scope}[shift={(2.5,4)}]
		\draw[line width=.2 mm] (9.5,-6.5) -- (9,-7);
		\draw[line width=.2 mm] (9.5,-6.5) -- (10,-7);
		\draw[line width=.2 mm] (9,-8) -- (10,-8);
		\draw[line width=.2 mm] (9,-8) -- (9,-7);
		\draw[line width=.2 mm] (10,-8) -- (10,-7);
		\draw[line width=.2 mm] (9,-7) -- (10,-7);
		\fill[black!100!](9.5,-6.5) circle (.07);
		\fill[black!100!](9,-8) circle (.07);
		\fill[black!100!](9,-7) circle (.07);
		\fill[black!100!](10,-7) circle (.07);
		\fill[black!100!](10,-8) circle (.07);
		\node (A4) at (9.5,-8.5)  {$\bf{B_5}$};
        \end{scope}
        \begin{scope}[shift={(3,4)}]
		\draw[line width=.2 mm] (5,-8) -- (7,-8);
		\draw[line width=.2 mm] (5,-8) -- (5,-6.5);
		\draw[line width=.2 mm] (7,-8) -- (7,-6.5);
		\draw[line width=.2 mm] (5,-6.5) -- (7,-6.5);
		\draw[line width=.2 mm] (5.5,-7.2) -- (5,-8);
		\draw[line width=.2 mm] (5.5,-7.2) -- (5,-6.5);
		\draw[line width=.2 mm] (5.5,-7.2) -- (6.5,-7.2);
		\draw[line width=.2 mm] (7,-8) -- (6.5,-7.2);
		\draw[line width=.2 mm] (7,-6.5) -- (6.5,-7.2);
		\fill[black!100!](5.5,-7.2) circle (.07);
		\fill[black!100!](6.5,-7.2) circle (.07);
		\fill[black!100!](5,-8) circle (.07);
		\fill[black!100!](7,-8) circle (.07);
		\fill[black!100!](7,-6.5) circle (.07);
		\fill[black!100!](5,-6.5) circle (.07);
		\node (A4) at (6.2,-8.5)  {$\bf{B_4}$};
        \end{scope}
		\end{tikzpicture}
		\caption{Some reduced graphs of rank $4$}
		\label{fig:rank 4 graphs}	
	\end{figure}

\begin{lemma}[\cite{Rank-4-graph-LAA}]\label{lemma: cha of graphs of rank 4}
The rank $r(G)=4$ if and only if $G\in \mathcal{M}(B_1, B_2, B_3, B_4, B_5, K_4, P_4, P_5),$ where the graphs $B_1, B_2, B_3, B_4, B_5$ are depicted in Figure \ref{fig:rank 4 graphs}.
\end{lemma}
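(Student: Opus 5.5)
The plan is to follow the standard \emph{reduction} route for graphs of small rank, which the paper cites from \cite{Rank-4-graph-LAA}. There are two directions. First, multiplication of vertices preserves rank. Second, every connected graph of rank $4$ is a blow-up of a \emph{reduced} graph (one with no two vertices sharing the same open neighbourhood), and the reduced graphs of rank $4$ form exactly the list $B_1,\dots,B_5,K_4,P_4,P_5$.

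For rank preservation, the adjacency matrix of $G\bigodot m$ is obtained from $A(G)$ by repeating the $i$-th row and column $m_i$ times. Repeated rows and columns do not change the rank, so $r(G\bigodot m)=r(G)$. This gives the ``if'' direction once I check that each listed graph has rank $4$. That check is a direct small determinant or rank computation; for instance $P_5$ and $B_5$ each have a $1$-dimensional kernel.

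For the ``only if'' direction, define $x\equiv y$ when $N(x)=N(y)$. Such $x,y$ are non-adjacent, since $x\notin N(x)$, so each class is independent and two classes are either completely joined or not joined at all. Hence $G=R\bigodot m$, where $R$ is the quotient graph. $R$ is connected, reduced, and $r(R)=r(G)=4$, so it suffices to classify connected reduced graphs $R$ of rank $4$. Since $A(R)$ is symmetric of rank $4$, it has a nonsingular principal $4\times4$ submatrix, that is, an induced subgraph $H$ on $4$ vertices with $\det A(H)\neq0$. Checking the $4$-vertex graphs, the candidates for $H$ are $K_4$, $P_4$, $2K_2$ and the paw $B_1$; graphs with twins, such as $C_4$, $K_4-e$, $K_{1,3}$ and $K_3\cup K_1$, are singular.

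Because the rows of $H$ span the row space, every row of $A(R)$ is determined by its restriction to the columns of $H$. Thus each vertex $x$ of $R$ is encoded by a vector $a_x\in\{0,1\}^4$ recording its adjacencies to $H$, and the whole adjacency matrix is recovered as $A(R)_{xy}=a_x^{T}A(H)^{-1}a_y$. Since $R$ is reduced, distinct vertices have distinct vectors, and $a_x\neq 0$ because a zero vector would give a zero row (an isolated vertex or a twin). So $|V(R)|\le 15$. A set $S$ of vectors is realisable iff $a_x^{T}A(H)^{-1}a_y\in\{0,1\}$ for all $x,y\in S$, $a_x^{T}A(H)^{-1}a_x=0$ for all $x$, and the resulting graph is connected.

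The main obstacle is the finite enumeration for each of the four choices of $H$. I would first compute $A(H)^{-1}$ and the ``compatibility graph'' on admissible vectors: those $v$ with $v^{T}A(H)^{-1}v=0$, joined when the off-diagonal value lies in $\{0,1\}$. The maximal compatible sets should then turn out to induce exactly $K_4,P_4,P_5,B_1,\dots,B_5$, up to taking the sub-collections that remain connected and reduced; any such sub-collection already contains $H$ and is again one of the listed graphs. Overlaps between cases, when a graph contains several nonsingular $4$-subgraphs, only cause repetition, not new graphs. To keep the case work manageable, I would organise it by the number of neighbours a new vertex has in $H$, and use the symmetry group of $H$ to cut down the vectors that need checking.
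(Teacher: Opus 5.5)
The paper gives no proof of this lemma; it is quoted from \cite{Rank-4-graph-LAA}, so there is no in-paper argument to compare against. Your outline is the standard twin-reduction argument for small-rank classifications, and each step you state is correct:
\begin{itemize}
\item the twin quotient $R$ is connected and reduced, with $G=R\bigodot m$ and $r(R)=r(G)$;
\item a symmetric matrix of rank $4$ has a nonsingular principal $4\times4$ submatrix;
\item the only nonsingular graphs on four vertices are $K_4$, $P_4$, $2K_2$ and the paw;
\item $A(R)_{xy}=a_x^{T}A(H)^{-1}a_y$, with the vectors $a_x$ distinct and nonzero.
\end{itemize}
The only thing you leave as a prediction is the finite check. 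It does come out as you expect:
\begin{itemize}
\item $H=K_4$: here $A(H)^{-1}=\tfrac13J-I$, so $v^{T}A(H)^{-1}v=k(k-3)/3$ for $|v|=k$. Only the four rows of $H$ are admissible, and $R=K_4$.
\item $H=2K_2$ with edges $12,34$: admissibility means $v_1v_2=v_3v_4=0$, so each new vector picks one endpoint of each edge. Two such vectors are compatible (and then adjacent) exactly when they pick the same endpoint of precisely one edge. One new vertex gives $P_5$, and a compatible pair gives $B_3$.
\item $H=P_4$ on the path $1\,2\,3\,4$: here $v^{T}A(H)^{-1}v=2(v_1v_2+v_3v_4-v_1v_4)$. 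The new admissible vectors are $p=(1,0,0,0)$, $q=(0,0,0,1)$, $r=(0,1,1,0)$, $s=(1,1,0,1)$, $t=(1,0,1,1)$. The compatible pairs are $\{p,r\},\{q,r\},\{p,s\},\{q,t\},\{s,t\}$, and there is no compatible triple. Single additions give $P_5$, $B_2$ and $B_5$; the first four pairs give $B_3$, and $\{s,t\}$ gives $B_4$.
\item $H$ the paw (triangle $123$, pendant $4$ at $3$): the new admissible vectors are $(1,0,0,0)$, $(0,1,0,0)$, $(1,0,0,1)$, $(0,1,0,1)$. Their compatibility graph is the $4$-cycle in this cyclic order. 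Single additions give $B_2$ and $B_5$. The pair $\{(1,0,0,1),(0,1,0,1)\}$ gives $B_4$, and the other three compatible pairs give $B_3$.
\end{itemize}
So every connected reduced graph of rank $4$ has at most $6$ vertices and is one of the eight listed graphs; your bound of $15$ is harmless. Together with your rank-preservation argument and the direct rank checks, this completes the proof. The rank checks are: $K_4$, $P_4$, $B_1$ nonsingular; nullity $1$ for $P_5$, $B_2$, $B_5$; nullity $2$ for $B_3$, $B_4$.

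One point you should state explicitly: the lemma depends on the paper's standing assumption that $G$ is connected. Indeed, $2K_2$, and hence every $K_{a,b}\cup K_{c,d}$, has rank $4$ but lies outside $\mathcal{M}(B_1,\ldots,B_5,K_4,P_4,P_5)$. Your requirement that $R$ be connected is exactly where this hypothesis enters. A minor slip: $K_3\cup K_1$ is singular because of its isolated vertex, not because of twins.
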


\begin{proposition}\label{prop:rank-4-d-td=2}
Let $G$ be a graph with $g(G)=3$ and $r(G)=4$. Then $\gamma(G)=\gamma_t(G)=2$ if and only if $G$ belongs to one of the following classes:
\begin{enumerate}
\item $G\in\mathcal{M}\bigl(B_2,B_4,B_5\bigr)$; or
\item $G\in\mathcal{M}\bigl(B_1,K_4\bigr)$, with $m_i\geq 2$ for at least one index $i$ corresponding to a dominating vertex $v_i$ of the base graph.
\end{enumerate}
\end{proposition}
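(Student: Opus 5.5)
By Lemma~\ref{lemma: cha of graphs of rank 4}, every $G$ with $r(G)=4$ has the form $H\bigodot m$ for some $H\in\{B_1,\dots,B_5,K_4,P_4,P_5\}$. The plan is to treat each base graph $H$ in turn, using three facts:
\begin{itemize}
\item Lemma~\ref{lemma:girth-blowup} says $g(G)=3$ exactly when $g(H)=3$.
\item Lemma~\ref{Lemma:TD(GammaI)EqualToTD(GammaOm)} gives $\gamma_t(G)=\gamma_t(H)$.
\item Lemmas~\ref{Lemma;Dom(GOm)geDom(G)} and~\ref{lemma:dn=tdn-implydn=dn-inmultiplication} control $\gamma(G)$ in terms of $\gamma(H)$.
\end{itemize}
So the whole argument is a finite check on the eight reduced graphs.

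First I discard the triangle-free bases. $P_4$ and $P_5$ have no triangle, so by Lemma~\ref{lemma:girth-blowup} no blow-up of them has girth $3$. Reading off Figure~\ref{fig:rank 4 graphs}:
\begin{itemize}
\item $B_1$ is a triangle with a pendant vertex;
\item $B_2$ is the bull;
\item $B_3$ is a $4$-cycle plus a vertex adjacent to two adjacent cycle vertices, plus a pendant;
\item $B_4$ is the triangular prism;
\item $B_5$ is the house.
\end{itemize}
All of these, and $K_4$, contain a triangle, so every blow-up of them has girth $3$ and stays in play.

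Next I compute $\gamma$ and $\gamma_t$ for each base graph with a triangle.
\begin{itemize}
\item For $B_2$, $B_4$ and $B_5$ I expect $\gamma(H)=\gamma_t(H)=2$. In $B_2$ the two triangle vertices carrying the pendants are adjacent and dominate everything; in $B_4$ a rung of the prism works; in $B_5$ an adjacent pair on the square does. None of these graphs has a dominating vertex, so $\gamma\ge 2$. Lemma~\ref{lemma:dn=tdn-implydn=dn-inmultiplication} then gives $\gamma(G)=2$, and Lemma~\ref{Lemma:TD(GammaI)EqualToTD(GammaOm)} gives $\gamma_t(G)=2$, for every blow-up. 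This yields item~(1).
\item For $B_1$ and $K_4$ there is a dominating vertex, so $\gamma(H)=1$ and $\gamma_t(H)=2$. If $m_i\ge 2$ at some dominating vertex $v_i$, then $\gamma(G)=\gamma_t(G)=2$ by the argument of Lemma~\ref{lemma:dn=1-tdn=dn=2-iff}. If instead $m_i=1$ at every dominating vertex, then that single copy still dominates $G$, so $\gamma(G)=1$. This yields item~(2). The ``only if'' direction is the same argument read backwards.
\item For $B_3$ I plan to show $\gamma_t(B_3)=3$, which by Lemma~\ref{Lemma:TD(GammaI)EqualToTD(GammaOm)} excludes every blow-up. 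The pendant forces its support vertex into any total dominating set. Exactly one more vertex would then have to dominate the remaining vertices not adjacent to the support, and I expect a direct check to rule this out.
\end{itemize}

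The main obstacle is this case-by-case verification, and in particular confirming the exact structure of $B_3$ (and of $B_1$ and $B_2$) from the figure. A misreading would move a base graph between the two lists. I would first fix explicit edge lists for each $B_i$, then list candidate pairs for $\gamma_t=2$: a pendant's support must be in any total dominating set, which prunes most pairs. The remaining directions, that $\gamma(G)\ge 2$ whenever $H$ has no dominating vertex, follow from Lemma~\ref{Lemma;Dom(GOm)geDom(G)}.
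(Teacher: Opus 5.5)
\paragraph{The gap.} The step that fails is your sufficiency claim for item (2): ``if $m_i\ge 2$ at some dominating vertex $v_i$, then $\gamma(G)=\gamma_t(G)=2$.'' This breaks whenever the base graph has more than one dominating vertex, and $K_4$ has four.

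Take $K_4\bigodot(2,1,1,1)$, i.e.\ $K_5$ minus an edge. It has rank $4$ and girth $3$. The single copy of $v_2$ is adjacent to every other vertex, so $\gamma=1\neq 2=\gamma_t$.

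Your dichotomy is cut in the wrong place. The negation of ``some dominating vertex has $m_i\ge 2$'' is ``every dominating vertex has $m_i=1$''. But $\gamma(G)=1$ already holds once a \emph{single} dominating vertex of the base has multiplicity $1$.

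The argument of Lemma~\ref{lemma:dn=1-tdn=dn=2-iff} that you invoke has exactly this defect, and the paper's proof rests on it too. Multiplying $v_i$ only removes domination by the copies of $v_i$, not by other dominating vertices. So the ``if'' direction of item (2), as stated, is false for $K_4$ and cannot be proved. Only the ``only if'' direction survives.

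\paragraph{The correct version.} The key observation is that $v_j^s$ is a dominating vertex of $H\bigodot m$ if and only if $v_j$ dominates $H$ and $m_j=1$. Hence, for $H\in\{B_1,K_4\}$, one has $\gamma(H\bigodot m)=\gamma_t(H\bigodot m)=2$ if and only if $m_i\ge 2$ for \emph{every} dominating vertex $v_i$ of $H$. The upper bound comes from $\gamma\le\gamma_t=\gamma_t(H)=2$.

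For $B_1$ this agrees with the stated condition, since the paw has a unique dominating vertex. For $K_4$ it demands $m_i\ge 2$ for all $i$. With that correction, your argument goes through.

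\paragraph{The case $B_3$.} The rest of your plan is sound, and on $B_3$ it is better than the paper's proof. The paper excludes $\mathcal{M}(B_3)$ by asserting $\gamma(B_3)\ge 3$, which is false. Let $a$ be the vertex carrying the pendant and $d$ either square vertex not adjacent to $a$; then $\{a,d\}$ dominates $B_3$.

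Your route through $\gamma_t(B_3)=3$ and Lemma~\ref{Lemma:TD(GammaI)EqualToTD(GammaOm)} is the right one, and the check you anticipate succeeds:
\begin{enumerate}
\item the pendant forces $a$ into any total dominating set;
\item the second vertex must be a neighbour of $a$ adjacent to both square vertices outside $N(a)$;
\item none of the three neighbours of $a$ has this property.
\end{enumerate}
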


\begin{proof}
Suppose first that $G\in\mathcal{M}\bigl(B_2,B_4,B_5\bigr)$. Since $\gamma(B_j)=\gamma_t(B_j)=2$ for $j\in{2,4,5}$, Lemmas~\ref{Lemma:TD(GammaI)EqualToTD(GammaOm)} and~\ref{lemma:dn=tdn-implydn=dn-inmultiplication} imply that $\gamma(G)=\gamma_t(G)=2$ for every vector $m$ of positive integers.

Next, suppose that $G\in\mathcal{M}(B_1,K_4)$, where $m=(m_1,m_2,m_3,m_4)$ with $m_i\geq 1$ for $1\leq i\leq 4$ and $m_i\geq 2$ for at least one $i,$ with $v_i$ denoting the corresponding dominating vertex. Since both the graphs $B_1$ and $K_4$ have dominating vertices, so Lemma~\ref{lemma:dn=1-tdn=dn=2-iff} yields $\gamma(G)=\gamma_t(G)=2$.

Conversely, let $G$ be a graph with $g(G)=3$ and $r(G)=4$ such that $\gamma(G)=\gamma_t(G)=2$. By Lemma~\ref{lemma: cha of graphs of rank 4}, we have $G\in\mathcal{M}\bigl(B_1,B_2,B_3,B_4,B_5,K_4\bigr)$. However, $G$ cannot belong to $\mathcal{M}(B_3)$, since $\gamma(B_3)\geq 3$ and, by Lemma~\ref{Lemma;Dom(GOm)geDom(G)}, we have $\gamma(B_3\bigodot m)\geq 3$ for every vector $m$ of positive integers.

It remains to consider the class $\mathcal{M}(B_1,K_4)$. If $m_i=1$ for every dominating vertex $v_i$, then clearly $\gamma(G)=1$, contrary to the assumption that $\gamma(G)=2$. Therefore, $m_i\geq 2$ for at least one $i$. Thus, $G$ belongs to one of the two stated classes, completing the proof.
\end{proof}

\begin{figure}
		\tiny
		\tikzstyle{ver}=[]
		\tikzstyle{vert}=[circle, draw, fill=black!100, inner sep=0pt, minimum width=4pt]
		\tikzstyle{vertex}=[circle, draw, fill=black!00, inner sep=0pt, minimum width=4pt]
		\tikzstyle{edge} = [draw,thick,-]
		\tikzstyle{node_style} = [circle,draw=blue,fill=blue!20!,font=\sffamily\Large\bfseries]
		\centering
		\begin{tikzpicture}[scale=1]
		\tikzstyle{edge_style} = [draw=black, line width=2mm, ]
		\tikzstyle{node_style} = [draw=blue,fill=blue!00!,font=\sffamily\Large\bfseries]
		\fill[black!100!](0,0) circle (.07);
		\fill[black!100!](.5,0) circle (.07);
		\fill[black!100!](1,0) circle (.07);
		\fill[black!100!](1.5,0) circle (.07);
		\fill[black!100!](2,0) circle (.07);
		\fill[black!100!](.5,-.5) circle (.07);
		\draw[line width=.2 mm] (0,0) -- (.5,0);
		\draw[line width=.2 mm] (.5,0) -- (1.5,0);
		\draw[line width=.2 mm] (2,0) -- (1.5,0);
		\draw[line width=.2 mm] (.5,-.5) -- (0,0);
		\draw[line width=.2 mm] (.5,-.5) -- (.5,0);
		\draw[line width=.2 mm] (.5,-.5) -- (1,0);
		\node (A4) at (.5,-1)  {$\bf{D_1}$};
		\fill[black!100!](3,0) circle (.07);
		\fill[black!100!](3.5,0) circle (.07);
		\fill[black!100!](4,0) circle (.07);
		\fill[black!100!](4.5,0) circle (.07);
		\fill[black!100!](5,0) circle (.07);
		\fill[black!100!](4,-.5) circle (.07);
		\draw[line width=.2 mm] (3,0) -- (3.5,0);
		\draw[line width=.2 mm] (3.5,0) -- (4,0);
		\draw[line width=.2 mm] (4,0) -- (4.5,0);
		\draw[line width=.2 mm] (4.5,0) -- (5,0);
		\draw[line width=.2 mm] (4,-.5) -- (3,0);
		\draw[line width=.2 mm] (4,-.5) -- (3.5,0);
		\draw[line width=.2 mm] (4,-.5) -- (4,0);
		\draw[line width=.2 mm] (4,-.5) -- (4.5,0);
		\node (A4) at (4,-1)  {$\bf{D_2}$};
		\fill[black!100!](6,0) circle (.07);
		\fill[black!100!](6.5,0) circle (.07);
		\fill[black!100!](7,0) circle (.07);
		\fill[black!100!](7.5,0) circle (.07);
		\fill[black!100!](8,0) circle (.07);
		\fill[black!100!](6.5,-.5) circle (.07);
		\fill[black!100!](7,-.5) circle (.07);
		\draw[line width=.2 mm] (6.5,-.5) -- (7,-.5);
		\draw[line width=.2 mm] (6,0) -- (6.5,0);
		\draw[line width=.2 mm] (6.5,0) -- (7,0);
		\draw[line width=.2 mm] (7,0) -- (7.5,0);
		\draw[line width=.2 mm] (7.5,0) -- (8,0);
		\draw[line width=.2 mm] (6.5,-.5) -- (6,0);
		\draw[line width=.2 mm] (6.5,-.5) -- (6.5,0);
		\draw[line width=.2 mm] (6.5,-.5) -- (7,0);
		\draw[line width=.2 mm] (7,-.5) -- (6,0);
		\draw[line width=.2 mm] (7,-.5) -- (7,0);
		\draw[line width=.2 mm] (7,-.5) -- (7.5,0);
		\node (A4) at (7,-1)  {$\bf{D_3}$};
		\fill[black!100!](9,0) circle (.07);
		\fill[black!100!](9.5,0) circle (.07);
		\fill[black!100!](10,0) circle (.07);
		\fill[black!100!](10.5,0) circle (.07);
		\fill[black!100!](11,0) circle (.07);
		\fill[black!100!](9.5,-.5) circle (.07);
		\fill[black!100!](10.5,-.5) circle (.07);
		\draw[line width=.2 mm] (9.5,-.5) -- (10.5,-.5);
		\draw[line width=.2 mm] (9,0) -- (9.5,0);
		\draw[line width=.2 mm] (9.5,0) -- (10,0);
		\draw[line width=.2 mm] (10,0) -- (10.5,0);
		\draw[line width=.2 mm] (10.5,0) -- (11,0);
		\draw[line width=.2 mm] (9.5,-.5) -- (9,0);
		\draw[line width=.2 mm] (9.5,-.5) -- (9.5,0);
		\draw[line width=.2 mm] (9.5,-.5) -- (10,0);
		\draw[line width=.2 mm] (10.5,-.5) -- (9.5,0);
		\draw[line width=.2 mm] (10.5,-.5) -- (10,0);
		\draw[line width=.2 mm] (10.5,-.5) -- (11,0);
		\node (A4) at (10,-1)  {$\bf{D_4}$};
		\fill[black!100!](12,0) circle (.07);
		\fill[black!100!](12.5,0) circle (.07);
		\fill[black!100!](13,0) circle (.07);
		\fill[black!100!](13.5,0) circle (.07);
		\fill[black!100!](14,0) circle (.07);
		\fill[black!100!](12.2,-.5) circle (.07);
		\fill[black!100!](13,-.5) circle (.07);
		\draw[line width=.2 mm] (12.2,-.5) -- (13,-.5);
		\draw[line width=.2 mm] (12,0) -- (12.5,0);
		\draw[line width=.2 mm] (12.5,0) -- (13,0);
		\draw[line width=.2 mm] (13,0) -- (13.5,0);
		\draw[line width=.2 mm] (13.5,0) -- (14,0);
		\draw[line width=.2 mm] (12.2,-.5) -- (12,0);
		\draw[line width=.2 mm] (12.2,-.5) -- (13,0);
		\draw[line width=.2 mm] (12.2,-.5) -- (13.5,0);
		\draw[line width=.2 mm] (13,-.5) -- (12,0);
		\draw[line width=.2 mm] (13,-.5) -- (12.5,0);
		\draw[line width=.2 mm] (13,-.5) -- (13,0);
		\draw[line width=.2 mm] (13,-.5) -- (13.5,0);
		\node (A4) at (12.5,-1)  {$\bf{D_5}$};
		\fill[black!100!](0,-1.5) circle (.07);
		\fill[black!100!](.5,-1.5) circle (.07);
		\fill[black!100!](1,-1.5) circle (.07);
		\fill[black!100!](1.5,-1.5) circle (.07);
		\fill[black!100!](2,-1.5) circle (.07);
		\fill[black!100!](1,-2) circle (.07);
		\fill[black!100!](1.5,-2) circle (.07);
		\draw[line width=.2 mm] (0,-1.5) -- (.5,-1.5);
		\draw[line width=.2 mm] (.5,-1.5) -- (1.5,-1.5);
		\draw[line width=.2 mm] (2,-1.5) -- (1.5,-1.5);
		\draw[line width=.2 mm] (1,-2) -- (0,-1.5);
		\draw[line width=.2 mm] (1,-2) -- (.5,-1.5);
		\draw[line width=.2 mm] (1,-2) -- (1,-1.5);
		\draw[line width=.2 mm] (1,-2) -- (1.5,-1.5);
		\draw[line width=.2 mm] (1.5,-2) -- (.5,-1.5);
		\draw[line width=.2 mm] (1.5,-2) -- (1,-1.5);
		\draw[line width=.2 mm] (1.5,-2) -- (1.5,-1.5);
		\draw[line width=.2 mm] (1.5,-2) -- (2,-1.5);
		\node (A4) at (1,-2.5)  {$\bf{D_6}$};
		\fill[black!100!](3,-1.5) circle (.07);
		\fill[black!100!](3.5,-1.5) circle (.07);
		\fill[black!100!](4,-1.5) circle (.07);
		\fill[black!100!](4.5,-1.5) circle (.07);
		\fill[black!100!](5,-1.5) circle (.07);
		\fill[black!100!](3,-2) circle (.07);
		\fill[black!100!](3.5,-2) circle (.07);
		\fill[black!100!](4,-2) circle (.07);
		\draw[line width=.2 mm] (3,-1.5) -- (3.5,-1.5);
		\draw[line width=.2 mm] (3.5,-1.5) -- (4,-1.5);
		\draw[line width=.2 mm] (4,-1.5) -- (4.5,-1.5);
		\draw[line width=.2 mm] (4.5,-1.5) -- (5,-1.5);
		\draw[line width=.2 mm] (3,-2) -- (3,-1.5);
		\draw[line width=.2 mm] (3,-2) -- (3.5,-1.5);
		\draw[line width=.2 mm] (3,-2) -- (4,-1.5);
		\draw[line width=.2 mm] (3.5,-2) -- (3,-1.5);
		\draw[line width=.2 mm] (3.5,-2) -- (4,-1.5);
		\draw[line width=.2 mm] (3.5,-2) --(4.5,-1.5);
		\draw[line width=.2 mm] (4,-2) -- (3,-1.5);
		\draw[line width=.2 mm] (4,-2) -- (3.5,-1.5);
		\draw[line width=.2 mm] (4,-2) -- (4,-1.5);
		\draw[line width=.2 mm] (4,-2) -- (4.5,-1.5);
		\node (A4) at (3.5,-2.5)  {$\bf{D_7}$};
		\fill[black!100!](6,-1.5) circle (.07);
		\fill[black!100!](6.5,-1.5) circle (.07);
		\fill[black!100!](7,-1.5) circle (.07);
		\fill[black!100!](7.5,-1.5) circle (.07);
		\fill[black!100!](6.3,-2) circle (.07);
		%
		%
		\draw[line width=.2 mm] (6,-1.5) -- (6.5,-1.5);
		\draw[line width=.2 mm] (6.5,-1.5) -- (7,-1.5);
		\draw[line width=.2 mm] (7,-1.5) -- (7.5,-1.5);
		\draw[line width=.2 mm] (6.3,-2) -- (6,-1.5);
		\draw[line width=.2 mm] (6.3,-2) -- (6.5,-1.5);
		\node (A4) at (6.5,-2.5)  {$\bf{D_8}$};
		\fill[black!100!](8,-1.5) circle (.07);
		\fill[black!100!](8.5,-1.5) circle (.07);
	\fill[black!100!](9,-1.5) circle (.07);
		\fill[black!100!](9.5,-1.5) circle (.07);
		\fill[black!100!](8.5,-2) circle (.07);
		%
		%
		\draw[line width=.2 mm] (8,-1.5) -- (8.5,-1.5);
		\draw[line width=.2 mm] (8.5,-1.5) -- (9,-1.5);
		\draw[line width=.2 mm] (9,-1.5) -- (9.5,-1.5);
		\draw[line width=.2 mm] (8.5,-2) -- (8,-1.5);
		\draw[line width=.2 mm] (8.5,-2) -- (8.5,-1.5);
		\draw[line width=.2 mm] (8.5,-2) -- (9,-1.5);
		\node (A4) at (8.5,-2.5)  {$\bf{D_9}$};
		\fill[black!100!](10,-1.5) circle (.07);
		\fill[black!100!](10.5,-1.5) circle (.07);
		\fill[black!100!](11,-1.5) circle (.07);
		\fill[black!100!](11.5,-1.5) circle (.07);
		\fill[black!100!](10.7,-2) circle (.07);
		%
		%
		\draw[line width=.2 mm] (10,-1.5) -- (10.5,-1.5);
		\draw[line width=.2 mm] (10.5,-1.5) -- (11,-1.5);
		\draw[line width=.2 mm] (11,-1.5) -- (11.5,-1.5);
		\draw[line width=.2 mm] (10.7,-2) -- (10,-1.5);
		\draw[line width=.2 mm] (10.7,-2) -- (10.5,-1.5);
		\draw[line width=.2 mm] (10.7,-2) -- (11,-1.5);
		\draw[line width=.2 mm] (10.7,-2) -- (11.5,-1.5);
		\node (A4) at (10.5,-2.5)  {$\bf{D_{10}}$};
		\fill[black!100!](12,-1.5) circle (.07);
		\fill[black!100!](12.5,-1.5) circle (.07);
		\fill[black!100!](13,-1.5) circle (.07);
		\fill[black!100!](13.5,-1.5) circle (.07);
		\fill[black!100!](12.5,-2) circle (.07);
		\fill[black!100!](13,-2) circle (.07);
		\draw[line width=.2 mm] (12.5,-2) -- (13,-2);
		\draw[line width=.2 mm] (12,-1.5) -- (12.5,-1.5);
		\draw[line width=.2 mm] (12.5,-1.5) -- (13,-1.5);
		\draw[line width=.2 mm] (13,-1.5) -- (13.5,-1.5);
		\draw[line width=.2 mm] (12.5,-2) -- (12,-1.5);
		\draw[line width=.2 mm] (12.5,-2) -- (12.5,-1.5);
		\draw[line width=.2 mm] (13,-2) -- (13,-1.5);
		\draw[line width=.2 mm] (13,-2) -- (13.5,-1.5);
		\node (A4) at (12.5,-2.5)  {$\bf{D_{11}}$};
		\fill[black!100!](0,-3) circle (.07);
		\fill[black!100!](.5,-3) circle (.07);
		\fill[black!100!](1,-3) circle (.07);
		\fill[black!100!](1.5,-3) circle (.07);
		\fill[black!100!](0,-3.5) circle (.07);
		\fill[black!100!](.5,-3.5) circle (.07);
		\draw[line width=.2 mm] (0,-3) -- (.5,-3);
		\draw[line width=.2 mm] (.5,-3) -- (1,-3);
		\draw[line width=.2 mm] (1,-3) -- (1.5,-3);
		\draw[line width=.2 mm] (0,-3.5) -- (0,-3);
		\draw[line width=.2 mm] (0,-3.5) -- (.5,-3);
		\draw[line width=.2 mm] (0,-3.5) -- (.5,-3.5);
		\draw[line width=.2 mm] (.5,-3.5) -- (1.5,-3);
		\draw[line width=.2 mm] (.5,-3.5) -- (.5,-3);
		\draw[line width=.2 mm] (.5,-3.5) -- (0,-3);
		\node (A4) at (.8,-4)  {$\bf{D_{12}}$};
		\fill[black!100!](2,-3) circle (.07);
		\fill[black!100!](2.5,-3) circle (.07);
		\fill[black!100!](3,-3) circle (.07);
		\fill[black!100!](3.5,-3) circle (.07);
		\fill[black!100!](2.5,-3.5) circle (.07);
		\fill[black!100!](3,-3.5) circle (.07);
		\draw[line width=.2 mm] (2,-3) -- (2.5,-3);
		\draw[line width=.2 mm] (2.5,-3) -- (3,-3);
		\draw[line width=.2 mm] (3,-3) -- (3.5,-3);
		\draw[line width=.2 mm] (2.5,-3.5) -- (2.5,-3);
		\draw[line width=.2 mm] (2.5,-3.5) -- (3,-3);
		\draw[line width=.2 mm] (2.5,-3.5) -- (3,-3.5);
		\draw[line width=.2 mm] (3,-3.5) -- (3,-3);
		\draw[line width=.2 mm] (3,-3.5) -- (2.5,-3);
		\draw[line width=.2 mm] (3,-3.5) -- (2,-3);
		\node (A4) at (2.8,-4)  {$\bf{D_{13}}$};
		\fill[black!100!](4,-3) circle (.07);
		\fill[black!100!](4.5,-3) circle (.07);
		\fill[black!100!](5,-3) circle (.07);
		\fill[black!100!](5.5,-3) circle (.07);
		\fill[black!100!](4.5,-3.5) circle (.07);
		\fill[black!100!](5,-3.5) circle (.07);
		\draw[line width=.2 mm] (4,-3) -- (4.5,-3);
		\draw[line width=.2 mm] (4.5,-3) -- (5,-3);
		\draw[line width=.2 mm] (5,-3) -- (5.5,-3);
		\draw[line width=.2 mm] (4.5,-3.5) -- (4,-3);
		\draw[line width=.2 mm] (4.5,-3.5) -- (4.5,-3);
		\draw[line width=.2 mm] (4.5,-3.5) -- (5,-3);
		\draw[line width=.2 mm] (4.5,-3.5) -- (5,-3.5);
		\draw[line width=.2 mm] (5,-3.5) -- (5,-3);
		\draw[line width=.2 mm] (5,-3.5) -- (4,-3);
		\draw[line width=.2 mm] (5,-3.5) -- (5.5,-3);
		\node (A4) at (4.8,-4)  {$\bf{D_{14}}$};
		\fill[black!100!](6,-3) circle (.07);
		\fill[black!100!](6.5,-3) circle (.07);
		\fill[black!100!](7,-3) circle (.07);
		\fill[black!100!](7.5,-3) circle (.07);
		\fill[black!100!](6.5,-3.5) circle (.07);
		\fill[black!100!](7,-3.5) circle (.07);
		\draw[line width=.2 mm] (6,-3) -- (6.5,-3);
		\draw[line width=.2 mm] (6.5,-3) -- (7,-3);
		\draw[line width=.2 mm] (7.5,-3) -- (7,-3);
		\draw[line width=.2 mm] (6.5,-3.5) -- (6,-3);
		\draw[line width=.2 mm] (6.5,-3.5) -- (6.5,-3);
		\draw[line width=.2 mm] (6.5,-3.5) -- (7.5,-3);
		\draw[line width=.2 mm] (6.5,-3.5) -- (7,-3.5);
		\draw[line width=.2 mm] (7,-3.5) -- (6,-3);
		\draw[line width=.2 mm] (7,-3.5) -- (6.5,-3);
		\draw[line width=.2 mm] (7,-3.5) -- (7,-3);
		\draw[line width=.2 mm] (7,-3.5) -- (7.5,-3);
		\node (A4) at (6.8,-4)  {$\bf{D_{15}}$};
		\fill[black!100!](8,-3) circle (.07);
		\fill[black!100!](8.5,-3) circle (.07);
		\fill[black!100!](9,-3) circle (.07);
		\fill[black!100!](9.5,-3) circle (.07);
		\fill[black!100!](8,-3.5) circle (.07);
		\fill[black!100!](9.5,-3.5) circle (.07);
		\fill[black!100!](8.6,-3.3) circle (.07);
		\draw[line width=.2 mm] (8,-3) -- (8.5,-3);
		\draw[line width=.2 mm] (8.5,-3) -- (9,-3);
		\draw[line width=.2 mm] (9.5,-3) -- (9,-3);
		\draw[line width=.2 mm] (8,-3.5) -- (9.5,-3.5);
		\draw[line width=.2 mm] (8,-3.5) -- (8,-3);
		\draw[line width=.2 mm] (8,-3.5) -- (8.5,-3);
		\draw[line width=.2 mm] (8,-3.5) -- (8.6,-3.3);
		\draw[line width=.2 mm] (9.5,-3.5) -- (9.5,-3);
		\draw[line width=.2 mm] (9.5,-3.5) -- (9,-3);
		\draw[line width=.2 mm] (8.6,-3.3) -- (9.5,-3);
		\draw[line width=.2 mm] (8.6,-3.3) -- (8.5,-3);
		\draw[line width=.2 mm] (8.6,-3.3) -- (8,-3);
		\node (A4) at (8.8,-4)  {$\bf{D_{16}}$};
		\fill[black!100!](10.5,-3) circle (.07);
		\fill[black!100!](11,-3) circle (.07);
		\fill[black!100!](11.5,-3) circle (.07);
		\fill[black!100!](12,-3) circle (.07);
		\fill[black!100!](10.5,-3.5) circle (.07);
		\fill[black!100!](11.2,-3.5) circle (.07);
		\fill[black!100!](12,-3.5) circle (.07);
		\draw[line width=.2 mm] (10.5,-3) -- (11,-3);
		\draw[line width=.2 mm] (11,-3) -- (11.5,-3);
		\draw[line width=.2 mm] (12,-3) -- (11.5,-3);
		\draw[line width=.2 mm] (11,-3) -- (10.5,-3.5);
		\draw[line width=.2 mm] (10.5,-3.5) -- (11.2,-3.5);
		\draw[line width=.2 mm] (10.5,-3.5) -- (10.5,-3);
		\draw[line width=.2 mm] (10.5,-3.5) -- (11.5,-3);
		\draw[line width=.2 mm] (10.5,-3.5) -- (11.5,-3);
		\draw[line width=.2 mm] (11.2,-3.5) -- (12,-3.5);
		\draw[line width=.2 mm] (11.2,-3.5) -- (10.5,-3);
		\draw[line width=.2 mm] (11.2,-3.5) -- (11.5,-3);
		\draw[line width=.2 mm] (11.2,-3.5) -- (12,-3);
		\draw[line width=.2 mm] (12,-3.5) -- (12,-3);
		\draw[line width=.2 mm] (12,-3.5) -- (11.5,-3);
		\draw[line width=.2 mm] (12,-3.5) -- (11,-3);
		\draw[line width=.2 mm] (12,-3.5) -- (10.5,-3);
		\node (A4) at (11.2,-4)  {$\bf{D_{17}}$};
		\fill[black!100!](0,-4.5) circle (.07);
		\fill[black!100!](.5,-4.5) circle (.07);
		\fill[black!100!](1,-4.5) circle (.07);
		\fill[black!100!](1.5,-4.5) circle (.07);
		\fill[black!100!](0,-5) circle (.07);
		\fill[black!100!](.5,-5) circle (.07);
		\fill[black!100!](1.5,-5) circle (.07);
		\draw[line width=.2 mm] (0,-4.5) -- (.5,-4.5);
		\draw[line width=.2 mm] (.5,-4.5) -- (1,-4.5);
		\draw[line width=.2 mm] (1,-4.5) -- (1.5,-4.5);
		\draw[line width=.2 mm] (0,-5) -- (0,-4.5);
		\draw[line width=.2 mm] (0,-5) -- (.5,-4.5);
		\draw[line width=.2 mm] (0,-5) -- (1.5,-4.5);
		\draw[line width=.2 mm] (0,-5) -- (.5,-5);
		\draw[line width=.2 mm] (.5,-5) -- (0,-4.5);
		\draw[line width=.2 mm] (.5,-5) -- (.5,-4.5);
		\draw[line width=.2 mm] (.5,-5) -- (1,-4.5);
		\draw[line width=.2 mm] (.5,-5) -- (1.5,-4.5);
		\draw[line width=.2 mm] (.5,-5) -- (1.5,-5);
		\draw[line width=.2 mm] (1.5,-5) -- (0,-4.5);
		\draw[line width=.2 mm] (1.5,-5) -- (1,-4.5);
		\draw[line width=.2 mm] (1.5,-5) -- (1.5,-4.5);
		\node (A4) at (.8,-5.3)  {$\bf{D_{18}}$};
		\fill[black!100!](2,-4.5) circle (.07);
		\fill[black!100!](2.5,-4.5) circle (.07);
		\fill[black!100!](3,-4.5) circle (.07);
		\fill[black!100!](3.5,-4.5) circle (.07);
		\fill[black!100!](2,-5.2) circle (.07);
		\fill[black!100!](3.5,-5.2) circle (.07);
		\fill[black!100!](2.5,-5) circle (.07);
		\fill[black!100!](3,-5) circle (.07);
		\draw[line width=.2 mm] (2,-4.5) -- (2.5,-4.5);
		\draw[line width=.2 mm] (2.5,-4.5) -- (3,-4.5);
		\draw[line width=.2 mm] (3,-4.5) -- (3.5,-4.5);
		\draw[line width=.2 mm] (2,-5.2) -- (2,-4.5);
		\draw[line width=.2 mm] (2,-5.2) -- (3.5,-5.2);
		\draw[line width=.2 mm] (2,-5.2) -- (2.5,-4.5);
		\draw[line width=.2 mm] (2,-5.2) -- (2.5,-5);
		\draw[line width=.2 mm] (3.5,-5.2) -- (3.5,-4.5);
		\draw[line width=.2 mm] (2.5,-5) -- (3.5,-4.5);
		\draw[line width=.2 mm] (3.5,-5.2) -- (3,-4.5);
		\draw[line width=.2 mm] (3.5,-5.2) -- (3,-5);
		\draw[line width=.2 mm] (2.5,-5) -- (2.5,-4.5);
		\draw[line width=.2 mm] (2.5,-5) -- (2,-4.5);
		\draw[line width=.2 mm] (3,-5) -- (3,-4.5);
		\draw[line width=.2 mm] (3,-5) -- (2,-4.5);
		\draw[line width=.2 mm] (3,-5) -- (3.5,-4.5);
		\node (A4) at (2.8,-5.5)  {$\bf{D_{19}}$};
		\fill[black!100!](4,-4.5) circle (.07);
		\fill[black!100!](4.5,-4.5) circle (.07);
		\fill[black!100!](5,-4.5) circle (.07);
		\fill[black!100!](4,-5) circle (.07);
		\fill[black!100!](4.5,-5) circle (.07);
		\draw[line width=.2 mm] (4,-4.5) -- (4.5,-4.5);
		\draw[line width=.2 mm] (4.5,-4.5) -- (5,-4.5);
		\draw[line width=.2 mm] (4,-5) -- (4,-4.5);
		\draw[line width=.2 mm] (4,-5) -- (4.5,-5);
		\draw[line width=.2 mm] (4,-5) -- (4.5,-4.5);
		\draw[line width=.2 mm] (4.5,-5) -- (4.5,-4.5);
		\draw[line width=.2 mm] (4.5,-5) -- (4,-4.5);
		\node (A4) at (4.2,-5.3)  {$\bf{D_{20}}$};
		\fill[black!100!](5.5,-4.5) circle (.07);
		\fill[black!100!](6,-4.5) circle (.07);
		\fill[black!100!](6.5,-4.8) circle (.07);
		\fill[black!100!](5.5,-5.2) circle (.07);
		\fill[black!100!](6,-5.2) circle (.07);
		\draw[line width=.2 mm] (5.5,-4.5) -- (6,-4.5);
		\draw[line width=.2 mm] (5.5,-4.5) -- (5.5,-5.2);
		\draw[line width=.2 mm] (5.5,-4.5) -- (6,-5.2);
		\draw[line width=.2 mm] (6,-4.5) -- (6,-5.2);
		\draw[line width=.2 mm] (6,-4.5) -- (5.5,-5.2);
		\draw[line width=.2 mm] (6.5,-4.8) -- (6,-4.5);
		\draw[line width=.2 mm] (6.5,-4.8) -- (6,-5.2);
		\draw[line width=.2 mm] (5.5,-5.2) -- (6,-5.2);
		\node (A4) at (5.8,-5.5)  {$\bf{D_{21}}$};
		\fill[black!100!](7,-4.5) circle (.07);
		\fill[black!100!](8,-4.5) circle (.07);
		\fill[black!100!](7,-5.2) circle (.07);
		\fill[black!100!](8,-5.2) circle (.07);
		\fill[black!100!](7.5,-4.9) circle (.07);
		\draw[line width=.2 mm] (7,-4.5) -- (7.5,-4.9);
		\draw[line width=.2 mm] (7,-4.5) -- (7,-5.2);
		\draw[line width=.2 mm] (8,-4.5) -- (7.5,-4.9);
		\draw[line width=.2 mm] (8,-4.5) -- (8,-5.2);
		\draw[line width=.2 mm] (7,-5.2) -- (7.5,-4.9);
		\draw[line width=.2 mm] (8,-5.2) -- (7.5,-4.9);
		\node (A4) at (7.5,-5.5)  {$\bf{D_{22}}$};
		\fill[black!100!](9,-4.5) circle (.07);
		\fill[black!100!](8.5,-4.8) circle (.07);
		\fill[black!100!](9.5,-4.8) circle (.07);
		\fill[black!100!](8.7,-5.3) circle (.07);
		\fill[black!100!](9.3,-5.3) circle (.07);
		\draw[line width=.2 mm] (9,-4.5) -- (8.5,-4.8);
		\draw[line width=.2 mm] (9,-4.5) -- (9.5,-4.8);
		\draw[line width=.2 mm] (8.5,-4.8) -- (8.7,-5.3);
		\draw[line width=.2 mm] (8.7,-5.3) -- (9.3,-5.3);
		\draw[line width=.2 mm] (9.3,-5.3) -- (9.5,-4.8);
		\node (A4) at (9,-5.7)  {$\bf{D_{23}}$};
		\fill[black!100!](10.5,-4.5) circle (.07);
		\fill[black!100!](10,-4.8) circle (.07);
		\fill[black!100!](11,-4.8) circle (.07);
		\fill[black!100!](10.2,-5.3) circle (.07);
		\fill[black!100!](10.8,-5.3) circle (.07);
		\draw[line width=.2 mm] (10.5,-4.5) -- (11,-4.8);
		\draw[line width=.2 mm] (10.5,-4.5) -- (11,-4.8);
		\draw[line width=.2 mm] (10,-4.8) -- (10.2,-5.3);
		\draw[line width=.2 mm] (10.2,-5.3) -- (10.8,-5.3);
		\draw[line width=.2 mm] (10,-4.8) -- (10.5,-4.5);
		\draw[line width=.2 mm] (10.8,-5.3) -- (11,-4.8);
		\draw[line width=.2 mm] (10.5,-4.5) -- (10.8,-5.3);
		\draw[line width=.2 mm] (10.5,-4.5) -- (10.2,-5.3);
		\draw[line width=.2 mm] (10,-4.8) -- (10.8,-5.3);
		\draw[line width=.2 mm] (10,-4.8) -- (11,-4.8);
		\draw[line width=.2 mm] (10.2,-5.3) -- (11,-4.8);
		\node (A4) at (10.5,-5.7)  {$\bf{D_{24}}$};
		\end{tikzpicture}
		\caption{Reduced graphs of rank $5$}
		\label{fig:rank 5 graphs}	
	\end{figure}
	Now, we state the main theorem in \cite{LAA-rank-5-graph-charecterization} that characterized all graphs of rank $5.$
	\begin{lemma}[\cite{LAA-rank-5-graph-charecterization}]\label{lemma: cha of graphs of rank 5}
	$r(G)=5$ if and only if $\Gamma\in\mathcal{M}(D_1, D_2, \cdots, D_{24}),$ where the graphs $D_1, D_2, \cdots, D_{24}$ are described in Figure \ref{fig:rank 5 graphs}.
	\end{lemma}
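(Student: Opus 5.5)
This lemma is quoted from \cite{LAA-rank-5-graph-charecterization}. If I were to reprove it, I would reduce it to a finite linear-algebra enumeration.

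\emph{Step 1: reduction to reduced graphs.} The rows of the adjacency matrix of $G\bigodot m$ are copies of the rows of the adjacency matrix of $G$, and likewise for the columns. Hence $r(G\bigodot m)=r(G)$. Conversely, two nonadjacent vertices with the same neighbourhood give identical rows. Deleting one of them therefore leaves the rank unchanged, and $G$ is recovered from the smaller graph by multiplication of vertices. Iterating, every graph is $H\bigodot m$ for a unique \emph{reduced} graph $H$, in which no two vertices have equal neighbourhoods. So it suffices to show that the connected reduced graphs of rank $5$ are exactly $D_1,\dots,D_{24}$.

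\emph{Step 2: a nonsingular core.} Every symmetric matrix of rank $r$ has a nonsingular $r\times r$ principal submatrix. Thus a reduced graph $H$ of rank $5$ contains an induced subgraph $F$ on $5$ vertices with $\det A(F)\ne 0$. Write $A=A(F)$ and, for each other vertex $x$, let $b_x\in\{0,1\}^5$ record its neighbours in $F$. Because the rank does not exceed $5$, the Schur complement vanishes. This forces the following conditions:
\[
b_x^{T}A^{-1}b_x=0 \qquad\text{and}\qquad [x\sim y]=b_x^{T}A^{-1}b_y \quad\text{for all } x\ne y .
\]
Reducedness forces the vectors $b_x$ to be pairwise distinct, distinct from $0$, and distinct from the columns of $A$. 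Indeed, $b_x=0$ gives a zero row. If $b_x$ were the column of a vertex $f\in F$, then $x$ would be adjacent only to the neighbours of $f$ in $F$, and its whole row would coincide with that of $f$. Consequently $H$ is determined by $F$ together with a set $S$ of admissible vectors, and $|V(H)|\le 5+2^5$.

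\emph{Step 3: enumeration.} I would list the graphs on $5$ vertices with nonsingular adjacency matrix; there are only a few. For each such $F$, I would compute the admissible vectors $b$, namely those with $b^TA^{-1}b=0$. I would then keep the sets $S$ for which every value $b^TA^{-1}b'$ with $b\ne b'$ lies in $\{0,1\}$, and discard graphs that are disconnected. Finally I would identify the resulting graphs up to isomorphism, noting that the same $H$ arises from several cores $F$, and check that exactly $D_1,\dots,D_{24}$ survive.

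\emph{Main obstacle.} Steps 1 and 2 are routine linear algebra. The real difficulty is the exhaustive case check in Step 3 and the isomorphism bookkeeping that removes duplicate graphs. I would organise it by first finding, for each core $F$, the maximal compatible set $S$. Every reduced graph with core $F$ is then an induced subgraph of the resulting maximal graph that contains $F$. This leaves only a short list of induced subgraphs to sort by isomorphism type, and it could be confirmed by computer.
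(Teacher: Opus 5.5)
Your route differs from the paper's, since the paper gives no proof of this lemma and simply imports it from the cited rank-$5$ classification. Steps 1 and 2 are sound.
\begin{itemize}
\item The principal-rank property of symmetric matrices gives a nonsingular $5$-vertex core $F$.
\item The vanishing Schur complement shows that every outside vertex is determined by its trace $b_x$ on $F$.
\item Conversely, any set $S$ with $b^TA^{-1}b=0$ and $b^TA^{-1}b'\in\{0,1\}$ yields a graph of rank exactly $5$. It is reduced if and only if the vectors of $S$ and the columns of $A$ are pairwise distinct.
\end{itemize}
One small correction: $b_x\neq 0$ is forced by connectedness, not reducedness, because a reduced graph may have one isolated vertex. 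Compared with the citation, your approach gives an independently checkable finite verification. That matters here, because all of Section~3 depends on the figure listing exactly the right $24$ graphs. The citation's advantage is that the case analysis, which you leave undone, has already been carried out.

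The step that fails as stated is the organizing shortcut in your last paragraph. A core need not have a unique maximal compatible set. So it is false that every reduced graph with core $F$ is an induced subgraph of a single maximal graph.

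Take $F=K_2\cup K_3$ on $\{1,2\}\cup\{3,4,5\}$. Then $A^{-1}=A(K_2)\oplus(\tfrac12 J_3-I_3)$, and for $b=(p;q)$ we get $b^TA^{-1}b=2p_1p_2+\tfrac12|q|^2-|q|$. The admissible vectors that are neither $0$ nor columns of $A$ are the six vectors $(p;q)$ with $p\in\{(1,0),(0,1)\}$ and $|q|=2$. Their pairwise values are:
\begin{itemize}
\item $b^TA^{-1}b'=1$ if $p=p'$;
\item $b^TA^{-1}b'=1$ if $p\neq p'$ and $q=q'$;
\item $b^TA^{-1}b'=2$ if $p\neq p'$ and $q\neq q'$.
\end{itemize}
The compatibility graph is therefore a triangular prism. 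It has five maximal cliques: two triangles and three matching edges. A matching edge gives a $7$-vertex reduced graph of rank $5$: one new vertex adjacent to $1,3,4$, another adjacent to $2,3,4$, and the two joined. It contains a vertex adjacent to $2$. Hence it does not embed, by any embedding fixing $F$, into the $8$-vertex graph built from the triangle $\{((1,0);q)\}$.

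The fix is routine. For each core, enumerate all maximal cliques (or simply all cliques, as your Step~3 literally says) of the compatibility graph on the at most $26$ candidate vectors. Relying on the shortcut as written, the enumeration could silently drop graphs.
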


\begin{proposition}\label{prop:rank-5-d-td=2}
Let $g(G)=3$ and $r(G)=5$. Then $\gamma(G)=\gamma_t(G)=2$ if and only if $G$ belongs to one of the following classes:
\begin{enumerate}
\item $G\in\mathcal{M}\bigl(D_2,D_5,D_6,D_8,D_9,D_{11},D_{12},D_{13},D_{14},D_{16},D_{17},D_{19}\bigr)$; or
\item $G\in\mathcal{M}\bigl(D_{10},D_{15},D_{18},D_{20},D_{21},D_{22},D_{24}\bigr)$, with $m_i\geq 2$ for at least one index $i$ corresponding to a dominating vertex $v_i$ of the base graph.
\end{enumerate}
\end{proposition}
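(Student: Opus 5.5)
The plan mirrors the proof of Proposition~\ref{prop:rank-4-d-td=2}. By Lemma~\ref{lemma: cha of graphs of rank 5}, every $G$ with $r(G)=5$ lies in $\mathcal{M}(D_1,\dots,D_{24})$. By Lemma~\ref{lemma:girth-blowup}, the condition $g(G)=3$ holds exactly when the reduced base graph $D_j$ contains a triangle. By Lemma~\ref{Lemma:TD(GammaI)EqualToTD(GammaOm)}, $\gamma_t(D_j\bigodot m)=\gamma_t(D_j)$ for all $m$. The problem therefore reduces to a finite inspection of the $24$ reduced graphs. For each $D_j$ we record:
\begin{enumerate}
\item whether it has a triangle;
\item $\gamma(D_j)$;
\item $\gamma_t(D_j)$;
\item which vertices, if any, are dominating.
\end{enumerate}

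\emph{Sufficiency.} For the twelve graphs in class (1), we exhibit an adjacent dominating pair, which gives $\gamma_t(D_j)\le 2$. We also check that no vertex dominates, which gives $\gamma(D_j)=2$. Then $\gamma(D_j)=\gamma_t(D_j)=2$, and Lemmas~\ref{Lemma:TD(GammaI)EqualToTD(GammaOm)} and~\ref{lemma:dn=tdn-implydn=dn-inmultiplication} yield $\gamma(G)=\gamma_t(G)=2$ for every blow-up $G$. We also confirm that each of these graphs contains a triangle, so every such $G$ has $g(G)=3$.

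For the seven graphs in class (2), we verify that each has a dominating vertex $v_i$ and contains a triangle. A dominating vertex together with any neighbour is a total dominating set, so $\gamma_t=2$. Lemma~\ref{lemma:dn=1-tdn=dn=2-iff} then gives $\gamma(G)=\gamma_t(G)=2$ whenever $m_i\ge 2$.

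One subtlety needs care when a base graph has several dominating vertices. Multiplying one dominating vertex does not destroy the other dominating vertices, so the blow-up can still have $\gamma=1$. The condition in (2) should therefore be read as: every dominating vertex $v_i$ of the base graph has $m_i\ge 2$. For example, in $K_4$ with only one part blown up, the remaining singleton parts still dominate. I would state and check this reading explicitly. The converse direction in Lemma~\ref{lemma:dn=1-tdn=dn=2-iff} already shows that if some dominating vertex keeps $m_i=1$, then $\gamma(G)=1$.

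\emph{Necessity.} Let $G=D_j\bigodot m$ with $g(G)=3$ and $\gamma(G)=\gamma_t(G)=2$. We exclude the remaining indices $j\in\{1,3,4,7,23\}$ one by one.
\begin{itemize}
\item If $D_j$ is triangle-free (as $D_{23}=C_5$ appears to be), then Lemma~\ref{lemma:girth-blowup} gives $g(G)\neq 3$, so $D_j$ is excluded.
\item If $\gamma(D_j)\ge 3$, then Lemma~\ref{Lemma;Dom(GOm)geDom(G)} gives $\gamma(G)\ge 3$.
\item If $\gamma_t(D_j)\ge 3$, then Lemma~\ref{Lemma:TD(GammaI)EqualToTD(GammaOm)} gives $\gamma_t(G)\ge 3$.
\end{itemize}
One of these three obstructions should apply to each of $D_1,D_3,D_4,D_7$. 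For instance, for $D_1$ I expect that no adjacent pair dominates the pendant path vertices, so $\gamma_t(D_1)=3$. For base graphs in class (2), the argument of the preceding paragraph forces $m_i\ge 2$ at the dominating vertices.

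The main obstacle is the case-by-case verification itself. The reduced graphs are only specified by Figure~\ref{fig:rank 5 graphs}, so the adjacency lists must be read off carefully, and $\gamma$ and $\gamma_t$ must be computed correctly on graphs with up to eight vertices. I would tabulate, for each $D_j$, a triangle, a minimum dominating set, and either an adjacent dominating pair or a short argument that none exists (for example, two pendant-type vertices with distant supports). In the proof, this table is what gets cited.
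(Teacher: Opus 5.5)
Your plan is the same as the paper's proof: a finite check on $D_1,\dots,D_{24}$ using the blow-up lemmas, discarding $D_{23}=C_5$ by girth and $D_1,D_3,D_4,D_7$ because $\gamma_t=3$. Your quantifier correction in (2) is also genuinely needed, not just cautious. In the rank-$5$ figure, $D_{21}$ is $K_4$ plus a vertex joined to two of its vertices, so it has two dominating vertices, and $D_{24}$ is $K_5$. So, for example, $K_5\bigodot(2,1,1,1,1)$ satisfies (2) as literally stated yet has $\gamma=1$.

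The paper misses this because the ``if'' direction of Lemma~\ref{lemma:dn=1-tdn=dn=2-iff} tacitly assumes a unique dominating vertex. Under your reading, you should therefore prove sufficiency directly rather than cite that lemma: a vertex in a part of size at least $2$ is not adjacent to its twin, and a singleton part $V_j$ dominates $G\bigodot m$ only if $v_j$ dominates the base graph.
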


\begin{proof}
We first prove the sufficiency. Suppose that
\[G\in\mathcal{M}\bigl(D_2,D_5,D_6,D_8,D_9,D_{11},D_{12},D_{13},D_{14},D_{16},D_{17},D_{19}\bigr).\]
For each
$j\in\{2,5,6,8,9,11,12,13,14,16,17,19\}$,
it can be easily check that $\gamma(D_j)=\gamma_t(D_j)=2$. Hence, by Lemmas~\ref{Lemma:TD(GammaI)EqualToTD(GammaOm)} and~\ref{lemma:dn=tdn-implydn=dn-inmultiplication}, it follows that
$\gamma(G)=\gamma_t(G)=2$ for every vector $m$ of positive integers.

Now suppose that
$G\in\mathcal{M}\bigl(D_{10},D_{15},D_{18},D_{20},D_{21},D_{22},D_{24}\bigr)$,
where $m_i\geq2$ for at least one $i$, and let $v_i$ denote the corresponding dominating vertex. Since each of the graphs $D_{10},D_{15},D_{18},D_{20},D_{21},D_{22},D_{24}$ has a dominating vertex, Lemma~\ref{lemma:dn=1-tdn=dn=2-iff} gives
$\gamma(G)=\gamma_t(G)=2$.

For the converse, let $G$ be a graph with $g(G)=3$ and $r(G)=5$ such that $\gamma(G)=\gamma_t(G)=2$. By Lemma~\ref{lemma:girth-blowup} and Lemma~\ref{lemma: cha of graphs of rank 4}, we have
$G\in\mathcal{M}\bigl(D_1,\ldots,D_{22},D_{24}\bigr)$.

The graphs $D_1,D_3,D_4,D_7$ can be excluded. Indeed,
$\gamma_t(D_i)=3$ for each $i\in\{1,3,4,7\}$. Consequently, by Lemma~\ref{Lemma;Dom(GOm)geDom(G)}, we have
$\gamma_t(D_i\bigodot m)\geq3$
for every vector $m$ of positive integers. Thus, none of these graphs can occur as a component in a graph satisfying $\gamma_t(G)=2$.

It remains to consider the classes
$\mathcal{M}\bigl(D_{10},D_{15},D_{18},D_{20},D_{21},D_{22},D_{24}\bigr)$.
If $m_i=1$ for every corresponding dominating vertex $v_i$, then the resulting graph has a dominating vertex and hence $\gamma(G)=1$, contradicting the assumption that $\gamma(G)=2$. Therefore, $m_i\geq2$ for at least one $i$.

Hence, $G$ belongs to one of the two classes stated in the proposition, and the proof is complete.
\end{proof}

\begin{theorem}\label{thm:girth-3-rank-2-5}
Let $G$ be a graph with $g(G)=3$ and $r(G)\in\{2,3,4,5\}$. Then $\gamma(G)=\gamma_t(G)=2$ if and only if $G$ belongs to one of the following classes:
\begin{enumerate}
\item[\textup{(i)}] $G\in\mathcal{M}(K_3)$, with $m_i\geq 2$ for at least one $i\in[3]$;

\item[\textup{(ii)}] $G\in\mathcal{M}\bigl(B_2,B_4,B_5\bigr)$;

\item[\textup{(iii)}] $G\in\mathcal{M}\bigl(B_1,K_4\bigr)$, with $m_i\geq 2$ for at least one index $i$ corresponding to a dominating vertex $v_i$ of the base graph;

\item[\textup{(iv)}] $G\in\mathcal{M}\bigl(D_2,D_5,D_6,D_8,D_9,D_{11},D_{12},D_{13},D_{14},D_{16},D_{17},D_{19}\bigr)$;

\item[\textup{(v)}] $G\in\mathcal{M}\bigl(D_{10},D_{15},D_{18},D_{20},D_{21},D_{22},D_{24}\bigr)$, with $m_i\geq 2$ for at least one index $i$ corresponding to a dominating vertex $v_i$ of the base graph.
\end{enumerate}

\end{theorem}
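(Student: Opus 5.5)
The theorem is the union of the three rank-specific classifications already established, so the proof will be a case split on $r(G)$ followed by assembly. I would first partition according to $r(G)\in\{2,3\}$, $r(G)=4$ and $r(G)=5$. Since these ranks are mutually exclusive, the classes in the statement correspond to disjoint rank regimes: (i) to ranks $2,3$; (ii)--(iii) to rank $4$; (iv)--(v) to rank $5$. Hence ``$\gamma(G)=\gamma_t(G)=2$'' holds if and only if it holds within the appropriate regime.

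For $r(G)\in\{2,3\}$ I will invoke Proposition~\ref{prop:rank-2,3-d-td=2}. It says that, under $g(G)=3$, the condition is equivalent to $G=K_3\bigodot m$ with $\max m_i\ge 2$. That is exactly class (i): every vertex of $K_3$ is dominating, so ``some $m_i\ge 2$'' matches the phrasing of (iii) and (v). Graphs in $\mathcal{M}(K_2)$ are excluded because they are complete bipartite, so $g\neq 3$.

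For $r(G)=4$ I will apply Proposition~\ref{prop:rank-4-d-td=2}, which gives (ii) and (iii). For $r(G)=5$ I will apply Proposition~\ref{prop:rank-5-d-td=2}, which gives (iv) and (v). Conversely, a graph in any listed class has the stated rank by Lemmas~\ref{lemma: cha of graphs rank leq 3}, \ref{lemma: cha of graphs of rank 4} and~\ref{lemma: cha of graphs of rank 5}. It has girth $3$ by Lemma~\ref{lemma:girth-blowup}, because each base graph contains a triangle. The corresponding proposition then yields $\gamma=\gamma_t=2$.

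The main obstacle is the bookkeeping behind the cited propositions rather than the assembly itself. In particular, the rank-$5$ list has to account for every $D_j$. The converse direction excludes $D_1,D_3,D_4,D_7$ because $\gamma_t\ge 3$, but that argument implicitly cites the domination-monotonicity lemma when what is needed is the total-domination equality of Lemma~\ref{Lemma:TD(GammaI)EqualToTD(GammaOm)}. The case of $D_{23}$, which is $C_5$ plus a pendant-free configuration without a triangle, is discarded by Lemma~\ref{lemma:girth-blowup}. I would verify these finitely many small base graphs directly: for each, compute $\gamma$ and $\gamma_t$, and identify any dominating vertices. Where a base graph has a dominating vertex, Lemma~\ref{lemma:dn=1-tdn=dn=2-iff} requires $m_i\ge 2$ for some such $i$. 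Once this finite check confirms the partition stated in the propositions, the theorem follows by taking the union of the cases.
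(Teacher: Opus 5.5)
Your assembly is the same as the paper's (its proof is just ``immediate from Propositions~\ref{prop:rank-2,3-d-td=2}, \ref{prop:rank-4-d-td=2}, \ref{prop:rank-5-d-td=2}''). It inherits a genuine error in the sufficiency direction, and in fact the statement as written is false. The failing step is Lemma~\ref{lemma:dn=1-tdn=dn=2-iff}, together with the inference in Lemma~\ref{lemma:rank-3-dn=tdn=2-iff} from ``not complete'' to $\gamma=2$: blowing up one dominating vertex does not destroy the others.

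Concretely, $K_3\bigodot(2,1,1)=K_4-e$ has girth $3$ and rank $3$, and lies in class (i). Yet each of its two unblown vertices is adjacent to every other vertex, so $\gamma=1$. The same failure occurs for $K_4\bigodot(2,1,1,1)$ in class (iii). In class (v) it occurs for $D_{24}$, which is $K_5$, and for $D_{21}$, whose two degree-$4$ vertices are both dominating. Your sentence ``every vertex of $K_3$ is dominating, so `some $m_i\ge 2$' matches the phrasing of (iii) and (v)'' is exactly backwards. Because every vertex of $K_3$ is dominating, every $m_i$ must be at least $2$.

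The correct criterion is short. A vertex $v_i^s$ of $G\bigodot m$ is dominating if and only if $v_i$ is dominating in $G$ and $m_i=1$, since the other copies of $v_i$ are non-adjacent to $v_i^s$. A base graph $H$ with a dominating vertex has $\gamma_t(H)=2$, so Lemma~\ref{Lemma:TD(GammaI)EqualToTD(GammaOm)} gives $\gamma(H\bigodot m)=\gamma_t(H\bigodot m)=2$ if and only if $m_i\ge 2$ for \emph{every} dominating vertex $v_i$ of $H$. The corrections are:
\begin{itemize}
\item class (i) must read $\min\{m_1,m_2,m_3\}\ge 2$;
\item $K_4$ and $D_{24}$ need all multiplicities at least $2$;
\item $D_{21}$ needs both dominating vertices blown up;
\item for $B_1$, $D_{10}$, $D_{15}$, $D_{18}$, $D_{20}$, $D_{22}$ the dominating vertex is unique, so the stated condition is already correct.
\end{itemize}

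The ``only if'' direction survives, since the stated condition is weaker than the true necessary one. Classes (ii) and (iv) are also fine, via Lemmas~\ref{Lemma:TD(GammaI)EqualToTD(GammaOm)} and~\ref{lemma:dn=tdn-implydn=dn-inmultiplication}.

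One more item for your finite check: Proposition~\ref{prop:rank-4-d-td=2} claims $\gamma(B_3)\ge 3$, but $\gamma(B_3)=2$. The neighbour of the pendant vertex together with either endpoint of the bottom edge dominates $B_3$. So $B_3$ must be excluded, just as you proposed for the $D_j$, via $\gamma_t(B_3)=3$ and Lemma~\ref{Lemma:TD(GammaI)EqualToTD(GammaOm)}.
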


\begin{proof}
Immediate from Propositions~\ref{prop:rank-2,3-d-td=2}, \ref{prop:rank-4-d-td=2}, and \ref{prop:rank-5-d-td=2}.
\end{proof}

\section{Two Extremes of Diameter-Two Graphs}

Diameter-two graphs form a natural testing ground for the equality $\gamma(G)=\gamma_t(G)$: this is precisely the setting in which Goddard and Henning \cite{GoddardHenning2002} established the sharp bound $\gamma_t(G)\le 3$ for planar graphs, leaving open exactly which diameter-two graphs attain $\gamma_t(G)=2$ rather than $3$. We answer this at two structural extremes. At the dense end, a diameter-two graph with a pendant vertex satisfies $\gamma_t(G)=2$ automatically whenever $\gamma(G)=2$. At the sparse end, we consider the extremal family $\mathcal{H}$ of diameter-two graphs with no dominating vertex and minimum size, determined by Erd\H{o}s and R\'enyi \cite{ERDOS-RENYEL} and classified by Henning and Southey \cite{Henning-Michael}; here every member satisfies $\gamma_t(G)\in\{3,6\}$, so $\gamma=\gamma_t=2$ never occurs. A pendant vertex forces total-domination equality; extremal sparsity forbids it outright.

\subsection{Graphs with a pendant vertex}

\begin{lemma}\label{Lemma:pendant-not-in-dom}
Let $\operatorname{diam}(G)=2$ and $\gamma(G)\ge 2$. Then no minimum dominating set of $G$ contains a pendant vertex.
\end{lemma}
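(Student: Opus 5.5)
The plan is to show that the hypotheses already rule out pendant vertices altogether, so the statement holds vacuously; no exchange argument on minimum dominating sets should be needed.

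Suppose, for contradiction, that $G$ has a pendant vertex $x$, and let $y$ be its unique neighbour. Take any vertex $z\notin\{x,y\}$. Since $\operatorname{diam}(G)=2$, we have $d(x,z)\le 2$. Because $z\neq y$ and $N(x)=\{y\}$, $z$ is not adjacent to $x$, so $d(x,z)=2$. Any path of length $2$ from $x$ must pass through $x$'s only neighbour, so it has the form $x\sim y\sim z$, and hence $z\sim y$.

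Thus $y$ is adjacent to every vertex of $G$ other than itself, i.e.\ $y$ is a dominating vertex. Then $\{y\}$ is a dominating set and $\gamma(G)=1$, contradicting $\gamma(G)\ge 2$. Hence $G$ has no pendant vertex, and in particular no minimum dominating set of $G$ contains one.

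I do not expect a real obstacle. The only point needing care is the step from $d(x,z)=2$ to $z\sim y$: a shortest $x$--$z$ path must leave $x$ through its unique neighbour $y$. One should also note that the degenerate case $G=K_2$, where $x$ and $y$ are the only vertices, does not arise, since it has diameter $1$ and domination number $1$.
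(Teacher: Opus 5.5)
Your argument is correct, and it takes a genuinely different route from the paper.

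The paper works inside a fixed minimum dominating set $D=\{v,v_1,\dots,v_{k-1}\}$ containing the pendant vertex $v$, with neighbour $x$. It uses minimality to get $x\notin D$, then claims $x$ has no neighbour among the $v_i$, and concludes $d(v,v_i)=3$, contradicting $\operatorname{diam}(G)=2$. You never use minimality of $D$. You apply the diameter condition directly to the pendant vertex, show that its neighbour is a dominating vertex, and only then invoke $\gamma(G)\ge 2$.

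Your route gains two things. First, it avoids a faulty step in the paper. To exclude $x\sim v_i$, the paper asserts that $D\setminus\{v\}$ still dominates ``since $v$ would then be dominated by $v_i$ through $x$''. Domination requires adjacency, and $v\not\sim v_i$, so $v$ is left undominated and the step fails. Your computation in fact shows that diameter $2$ forces $x\sim v_i$ for every $i$, which is exactly the case the paper tries to dismiss.

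Second, your argument shows the lemma is vacuous: a diameter-$2$ graph with a pendant vertex always has $\gamma(G)=1$. Consequently, the paper's next theorem (pendant vertex, $\operatorname{diam}(G)=2$, $\gamma(G)=2$ $\Rightarrow$ $\gamma_t(G)=2$) has no graphs satisfying its hypotheses.
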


\begin{proof}
Suppose, for a contradiction, that 
$D=\{v,v_1,\dots,v_{k-1}\}$
is a minimum dominating set of $G$, where $\gamma(G)=k$ and $\deg(v)=1$. Let $x$ be the unique neighbor of $v$.
Since $D$ is a minimum dominating set, every vertex of $D$ must dominate at least one vertex outside $D$; otherwise, it could be removed while preserving domination, contradicting the minimality of $D$. In particular, $v$ must dominate a vertex outside $D$. As $\deg(v)=1$, its only neighbor is $x$, so $x \notin D$. Moreover, $v$ is not adjacent to any of the vertices $v_1,\dots,v_{k-1}$.

We claim that $x$ is not adjacent to any vertex $v_i$ for $i\in\{1,\dots,k-1\}$. Indeed, if $x$ were adjacent to some $v_i$, then $v$ could be removed from $D$ and the set
$D'=\{v_1,\dots,v_{k-1}\}$
would still dominate $G$, since $v$ would then be dominated by $v_i$ through $x$, and all other vertices would remain dominated. This contradicts the minimality of $D$. Hence, $x$ is not adjacent to any $v_i$.
Since $G$ is connected and $x \notin D$, the vertex $x$ must be adjacent to some vertex $y$ that is dominated by some $v_i$ with $i\in\{1,\dots,k-1\}$. Therefore, there exists a path
$v \sim x \sim y \sim v_i$
of length $3$ between $v$ and $v_i$. As $v$ is adjacent only to $x$, and $x$ is not adjacent to any $v_i$, no shorter path between $v$ and $v_i$ exists. Thus,
$d(v,v_i)=3.$
It follows that $\operatorname{diam}(G)\ge 3$, contradicting the assumption that $\operatorname{diam}(G)=2$.
Therefore, no minimum dominating set of $G$ contains a pendant vertex.
\end{proof}
  
\begin{theorem}
Let $G$ be a graph with a pendant vertex, $\operatorname{diam}(G)=2$, and $\gamma(G)=2$. Then $\gamma_t(G)=2$.
\end{theorem}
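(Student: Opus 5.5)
The plan is to show that the hypotheses can never hold together, so the statement is vacuously true. Lemma~\ref{Lemma:pendant-not-in-dom} will not be needed; a direct distance argument is enough. Let $v$ be a pendant vertex of $G$ and let $x$ be its unique neighbour.

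\emph{Step 1.} I will show that $x$ is a dominating vertex. Since $\operatorname{diam}(G)=2$, every vertex $w\notin\{v,x\}$ satisfies $d(v,w)\le 2$. Also $w\ne v$ and $w\not\sim v$, because $N(v)=\{x\}$. Hence $d(v,w)=2$, so there is a path $v\sim z\sim w$. The only possible middle vertex is $z=x$, so $w\sim x$. The vertex $v$ is adjacent to $x$ as well. Therefore $N[x]=V(G)$, and $\{x\}$ is a dominating set.

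\emph{Step 2.} Step~1 gives $\gamma(G)=1$, which contradicts the hypothesis $\gamma(G)=2$. So no graph satisfies all three hypotheses at once, and the conclusion $\gamma_t(G)=2$ holds vacuously. If one wants an explicit total dominating set as well, note that any graph with a dominating vertex $x$ and at least one edge satisfies $\gamma_t(G)\le 2$: the set $\{x,v\}$ works, because every vertex other than $x$ is adjacent to $x$ and $x$ itself is adjacent to $v$.

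\emph{Main obstacle.} Steps~1 and~2 are the whole argument; the only point to check is the degenerate case $n\le 2$. There $G=K_2$, whose diameter is $1$, so the hypothesis $\operatorname{diam}(G)=2$ already excludes it. It is worth stating explicitly in the write-up that the hypotheses are mutually inconsistent, so that the theorem is not read as a substantive structural result.
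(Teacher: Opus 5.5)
Your proof is correct, and it takes a different, more revealing route than the paper. The paper never notices that the hypotheses are inconsistent. It first invokes Lemma~\ref{Lemma:pendant-not-in-dom} to place the support vertex $u$ of the pendant vertex $a$ in every minimum dominating set $D=\{u,v\}$. It then argues that $u\not\sim v$ would force $d(a,v)\ge 3$, so $u\sim v$ and $D$ is itself a total dominating set. That distance step is exactly your Step~1 applied to the single vertex $v$. By applying it to every vertex, you see that $u$ dominates $G$, so $\gamma(G)=1$ and the theorem holds only vacuously. The same holds for Lemma~\ref{Lemma:pendant-not-in-dom}, whose hypotheses likewise rule out pendant vertices.

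Your route is self-contained and does not depend on that lemma. This matters, because the paper's proof of the lemma has an invalid step: when $x\sim v_i$, the set $\{v_1,\dots,v_{k-1}\}$ does not dominate the pendant vertex $v$ ``through $x$'', since domination requires adjacency. Granted the lemma, the paper's argument is logically fine, but it gives nothing beyond what yours gives. Your closing remark in fact supplies the meaningful version of the result: every diameter-two graph with a pendant vertex has $\gamma(G)=1$ and $\gamma_t(G)=2$.
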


\begin{proof}
Let $a$ be a pendant vertex of $G$ with $\deg(a)=1$, and let $u$ be its unique neighbor. By Lemma~\ref{Lemma:pendant-not-in-dom}, no minimum dominating set of $G$ contains a pendant vertex. Since $\gamma(G)=2$, it follows that every minimum dominating set must contain $u$.
Let $D=\{u,v\}$ be a minimum dominating set of $G$. If $u\sim v$, then each vertex of $D$ is adjacent to another vertex of $D$, and hence $D$ is a total dominating set. Therefore, $\gamma_t(G)=2$.

Suppose, on the contrary, that $u$ and $v$ are not adjacent. Since $a$ is adjacent only to $u$, any path from $a$ to $v$ must pass through $u$. Because $u \nsim v$, there must exist a vertex $y$ such that
$u \sim y \sim v.$
Consequently,
$a \sim u \sim y \sim v$
is a path of length $3$ from $a$ to $v$. 
Moreover, no shorter path can exist: $a$ is adjacent only to $u$, and $u$ is not adjacent to $v$. Hence,
$d(a,v)=3,$
which implies that $\operatorname{diam}(G)\ge 3$. This contradicts the assumption that $\operatorname{diam}(G)=2$.
Therefore, $u$ must be adjacent to $v$, and hence $D$ is a total dominating set of $G$. Consequently, $\gamma_t(G)=2$.
\end{proof}

\subsection{The extremal family $\mathcal{H}$ of Erdős-Rényi and Henning-Southey}

Let $\mathcal{H}$ be the family of graphs that: 
\begin{enumerate}
\item contains $C_5, G_0$ (as in Figure \ref{fig:graph of diam 2;cycle 0f length $5$ and its duplication}) and the Petersen graph; and 
\item is closed under degree-$2$ vertex duplication,
\end{enumerate}
\begin{figure}[H]
\tiny
\tikzstyle{ver}=[H]
			\tikzstyle{vert}=[circle, draw, fill=black!100, inner sep=0pt, minimum width=4pt]
			\tikzstyle{vertex}=[circle, draw, fill=black!00, inner sep=0pt, minimum width=4pt]
			\tikzstyle{edge} = [draw,thick,-]
			\tikzstyle{node_style} = [circle,draw=blue,fill=blue!20!,font=\sffamily\Large\bfseries]
			\centering
			\begin{tikzpicture}[scale=1]
			\tikzstyle{edge_style} = [draw=black, line width=2mm, ]
			\tikzstyle{node_style} = [draw=blue,fill=blue!00!,font=\sffamily\Large\bfseries]
			\node (A4) at (1.5,-3.5)  {$\bf{G_0}$};
			\node (A4) at (6.5,-3.5)  {$\bf{G_1}$};
			\draw[line width=.2 mm] (0,-3) -- (3,-3);
			\draw[line width=.2 mm] (0,-3) -- (1.5,-1);
			\draw[line width=.2 mm] (3,-3) -- (1.5,-1);
			\fill[black!100!](0,-3) circle (.07);
			\fill[black!100!](3,-3) circle (.07);
			\fill[black!100!](1.5,-1) circle (.07);
			\draw[line width=.2 mm] (0,-3) -- (1.5,-2.5);
			\draw[line width=.2 mm] (1.5,-1) -- (1.5,-2.5);
			\draw[line width=.2 mm] (3,-3) -- (1.5,-2.5);
			\fill[red!100!](1.5,-2.5) circle (.07);
			\fill[red!100!](.8,-2.74) circle (.07);
			\fill[black!100!](2.2,-2.74) circle (.07);
			\fill[black!100!](1.5,-1.9) circle (.07);
			%
			\draw[line width=.2 mm] (5,-3) -- (8,-3);
			\draw[line width=.2 mm] (5,-3) -- (6.5,-1);
			\draw[line width=.2 mm] (8,-3) -- (6.5,-1);
			\fill[black!100!](5,-3) circle (.07);
			\fill[black!100!](8,-3) circle (.07);
			\fill[black!100!](6.5,-1) circle (.07);
			\draw[line width=.2 mm] (5,-3) -- (6.5,-2.5);
			\draw[line width=.2 mm] (6.5,-1) -- (6.5,-2.5);
			\draw[line width=.2 mm] (8,-3) -- (6.5,-2.5);
			\fill[red!100!](6.5,-2.5) circle (.1);
			\fill[red!100!](5.8,-2.74) circle (.07);
			\fill[black!100!](7.2,-2.74) circle (.07);
			\fill[black!100!](6.5,-1.9) circle (.07);
			\fill[black!100!](6.25,-1.9) circle (.07);
			\fill[black!100!](6.75,-1.9) circle (.07);
			\draw[line width=.2 mm] (6.25,-1.9) -- (6.5,-1);
			\draw[line width=.2 mm] (6.25,-1.9) -- (6.5,-2.5);
			\draw[line width=.2 mm] (6.75,-1.9) -- (6.5,-1);
			\draw[line width=.2 mm] (6.75,-1.9) -- (6.5,-2.5);
			\fill[black!100!](5.8,-2.4) circle (.07);
			\draw[line width=.2 mm] (6.5,-2.5) -- (5.8,-2.4);
			\draw[line width=.2 mm] (5,-3) -- (5.8,-2.4);
			\fill[black!100!](6,-2.85) circle (.06);
			\draw[line width=.2 mm] (6.5,-2.5) -- (6,-2.85);
			\draw[line width=.2 mm] (5,-3) -- (6,-2.85);
			\fill[black!100!](7.2,-2.4) circle (.07);
			\draw[line width=.2 mm] (6.5,-2.5) -- (7.2,-2.4);
			\draw[line width=.2 mm] (8,-3) -- (7.2,-2.4);
			\fill[black!100!](7,-2.85) circle (.06);
			\draw[line width=.2 mm] (6.5,-2.5) -- (7,-2.85);
			\draw[line width=.2 mm] (8,-3) -- (7,-2.85);

\end{tikzpicture}
\caption{The graph $G_1$ is obtained from $G_0$ by duplicating six degree-$2$ vertices.}
\label{fig:graph of diam 2;cycle 0f length $5$ and its duplication}	
\end{figure}	
In \cite{ERDOS-RENYEL}, Erd\H{o}s and R\'{e}nyi, proved the following classical result.
\begin{theorem}[\cite{ERDOS-RENYEL}]\label{Thm: s geq 2n-5, by ERDOS-RENYEL}
If $G$ is a diameter-$2$ graph of order $n$ and size $s$ with no dominating vertex, then $s\geq 2n-5.$
\end{theorem}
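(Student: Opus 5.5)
This is the classical Erdős–Rényi bound: a diameter-$2$ graph of order $n$ with no dominating vertex has size $s\ge 2n-5$. The plan is an argument on degrees and neighborhoods, by induction on $n$. Throughout, $\Delta$ denotes the maximum degree and $n\ge 5$, since small graphs without a dominating vertex cannot have diameter $2$ unless they have at least $5$ vertices; the base case is $C_5$ with $s=5=2\cdot 5-5$.

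\emph{Step 1 (small minimum degree).} If $\delta(G)=1$, let $a$ be a pendant vertex with neighbor $u$. Diameter $2$ forces $u$ to be adjacent to every other vertex, so $u$ is dominating, which is a contradiction. Hence $\delta(G)\ge 2$.

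\emph{Step 2 (a degree-$2$ vertex).} Suppose $\deg(x)=2$ with $N(x)=\{y,z\}$. Every other vertex lies at distance at most $2$ from $x$, so $V=\{x\}\cup N[y]\cup N[z]$. I will try to delete $x$ and use induction. The obstacle is that $G-x$ may lose diameter $2$ (if $y\nsim z$ and $y,z$ have no other common neighbor) or may acquire a dominating vertex. I would handle these cases directly:
\begin{itemize}
\item If $G-x$ has a dominating vertex $w$, then $w\nsim x$, so $w\notin\{y,z\}$, and $w$ is adjacent to all vertices except $x$. Each vertex $v\ne y,z,w$ must reach $x$ within distance $2$, so $v$ is adjacent to $y$ or $z$. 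This already gives at least $(n-2)+2+(n-4)$ edges, which is at least $2n-5$.
\item If $x$ is the only common neighbor of $y$ and $z$ and $y\nsim z$, then $N(y)\setminus\{x\}$ and $N(z)\setminus\{x\}$ partition $V\setminus\{x,y,z\}$. Diameter $2$ between a vertex of $N(y)$ and $z$ forces at least one edge from each vertex of $N(y)\setminus\{x\}$ into $N(z)$, and symmetrically. Counting these edges gives at least $2+(n-3)+\lceil (n-3)/1\rceil\cdot\tfrac12\cdot 2$, and I expect this to reach $2n-5$ after a careful count.
\item Otherwise, $G-x$ satisfies the hypotheses, so induction gives $s-2\ge 2(n-1)-5$, hence $s\ge 2n-5$.
\end{itemize}

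\emph{Step 3 (minimum degree at least $3$).} If $\delta\ge 3$ and $n\ge 10$, then $s\ge 3n/2\ge 2n-5$ holds trivially. For $n<10$, I will use a Moore-type count: take a vertex $v$ of maximum degree $\Delta\le n-2$. Each vertex outside $N[v]$ needs a neighbor in $N(v)$, and each vertex of $N(v)$ has degree at least $3$. Summing degrees, $s\ge \tfrac12\bigl(\Delta+3\Delta+3(n-1-\Delta)\bigr)$, which settles all but a few tiny cases; those can be checked by hand.

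\emph{Main obstacle.} The main difficulty is Step 2, where deleting a degree-$2$ vertex can destroy the hypotheses. The edge count in the ``unique common neighbor'' subcase is the delicate part. There I would first try to reduce by contracting $x$ into an edge $yz$ instead of deleting it: this keeps the diameter and the number of vertices lost under control, and the edge count drops by exactly $1$ per vertex removed. I would then compare the result with the bound $2(n-1)-5$.
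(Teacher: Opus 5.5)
The paper does not prove this statement (it is quoted from Erd\H{o}s and R\'enyi), so your argument has to stand on its own. As written it has two genuine gaps. The first is in Step 3: $3n/2\ge 2n-5$ holds exactly when $n\le 10$, not when $n\ge 10$, so the inequality is reversed. Step 3 therefore only covers $n\le 10$, where $\delta\ge 3$ alone already suffices and the Moore-type count is not needed. The real case, $\delta\ge 3$ with $n\ge 11$, is left open. Your bound $2s\ge \Delta+3n-3$ reaches $4n-10$ only when $\Delta\ge n-7$. A count that closes the case goes as follows. Let $x$ have minimum degree $\delta$. The edges $xy$ contribute $\delta$ to $\sum_{y\in N(x)}\deg(y)$, and each of the $n-1-\delta$ vertices at distance $2$ from $x$ contributes at least one more, so $\sum_{y\in N(x)}\deg(y)\ge n-1$. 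The remaining $n-\delta$ vertices have degree at least $\delta$. Hence $2s\ge (n-1)+\delta(n-\delta)\ge (n-1)+3(n-3)=4n-10$ whenever $3\le\delta\le n-3$. If $\delta=n-2$, then $G$ is $(n-2)$-regular and $s=n(n-2)/2\ge 2n-5$.

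The second gap is the bullet of Step 2 where $y\nsim z$ and $x$ is their only common neighbor. Put $A=N(y)\setminus\{x\}$ and $B=N(z)\setminus\{x\}$. Your expression $2+(n-3)+\lceil (n-3)/1\rceil\cdot\tfrac12\cdot 2$ treats each vertex of $A\cup B$ as owning its own $A$--$B$ edge. But one such edge serves a vertex on each side, so the condition you state only yields $\max(|A|,|B|)\ge\lceil (n-3)/2\rceil$ further edges. That gives $s\ge n-1+\lceil (n-3)/2\rceil$, which is below $2n-5$ for $n\ge 7$. The contraction fallback fails for the reason you note yourself: replacing $x$ by the edge $yz$ removes one vertex but only one edge, so induction gives only $s\ge 2n-6$. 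The missing observation is that $H=G[A\cup B]$ is connected. For $a\in A$ and $b\in B$, no common neighbor can be $x$, $y$ or $z$, so $a$ and $b$ are joined inside $H$ by a path of length at most $2$. Also $A,B\neq\emptyset$ because $\delta\ge 2$. Hence $|E(H)|\ge n-4$, and adding $xy$, $xz$ and the $n-3$ edges from $y$ to $A$ and from $z$ to $B$ gives $s\ge 2n-5$.

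A smaller point: $C_4$ is a diameter-$2$ graph on $4$ vertices with no dominating vertex, so your claim that $n\ge 5$ is false and the induction must start at $n=4$. This case actually arises: deleting a degree-$2$ vertex of $K_{2,3}$ in your last bullet leaves $C_4$. Step 1 and the other two bullets of Step 2 are correct, and with these repairs your outline becomes a complete proof.
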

After that, in \cite{Henning-Michael}, Henning and Southey classified all graphs $G$ such that $s=2n-5.$ In fact, they proved the following:
\begin{theorem}[\cite{Henning-Michael}]\label{thm H-M: dim2 m  geq 2n-5}
If $G$ is a diameter-$2$ graph of order $n$ and size $s$ with no dominating vertex, then $s\geq 2n-5$ with equality if and only if $G\in\mathcal{H}.$ 
\end{theorem}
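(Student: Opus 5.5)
The lower bound $s\ge 2n-5$ is exactly Theorem~\ref{Thm: s geq 2n-5, by ERDOS-RENYEL}, so the work lies in the equality case. I would prove the two directions separately.

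\emph{Members of $\mathcal{H}$ are extremal.} First I check the three base graphs directly. For $C_5$ we have $n=s=5$. For the Petersen graph, $n=10$ and $s=15=2\cdot 10-5$. For $G_0$ (the triangle with a central vertex joined to its corners by three subdivided spokes) we have $n=7$ and $s=9$. Each of these has diameter $2$ and no dominating vertex.

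Next I show that duplicating a degree-$2$ vertex $w$ (neighbours $x,y$) preserves all three properties:
\begin{itemize}
\item it adds one vertex and two edges, so $s=2n-5$ is kept;
\item the copy $w'$ has the same neighbourhood as $w$, so $d(w,w')=2$ and all other distances to $w'$ equal those to $w$, so the diameter stays $2$;
\item a dominating vertex of the new graph would have to be $x$ or $y$ (being adjacent to both $w$ and $w'$), and it would already have dominated the old graph.
\end{itemize}
Induction on the number of duplications then gives equality on all of $\mathcal{H}$.

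\emph{Extremal graphs lie in $\mathcal{H}$.} Let $G$ be diameter-$2$, with no dominating vertex and $s=2n-5$. I argue by induction on $n$. First I get $\delta(G)\ge 2$. If $\deg(a)=1$ with neighbour $u$, then diameter $2$ forces every other vertex to be within distance $1$ of $u$, so $u$ is dominating, a contradiction.

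Suppose $G$ has two nonadjacent degree-$2$ vertices $w,w'$ with $N(w)=N(w')$. Deleting $w'$ keeps diameter $2$, keeps the graph free of dominating vertices, and gives $s-2=2(n-1)-5$. By induction $G-w'\in\mathcal{H}$, so $G$ is obtained from it by a degree-$2$ duplication. It therefore remains to show that a \emph{reduced} extremal graph, one with no such twin pair, is $C_5$, $G_0$ or the Petersen graph.

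If $\delta\ge 3$, then $3n\le 2s=4n-10$, so $n\ge 10$. A diameter-$2$ graph with maximum degree $\Delta$ satisfies $n\le \Delta^2+1$, and the average degree is below $4$. Combining these should force $G$ to be cubic with $n=10$, i.e.\ a Moore graph of degree $3$, which is the Petersen graph. Turning ``average degree below $4$'' into ``cubic'' will need a further counting step here.

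If $\delta=2$, take $v$ of degree $2$ with $N(v)=\{x,y\}$. Every other vertex is adjacent to $x$ or to $y$, and I split according to whether $x\sim y$. Counting edges among $N(x)\cup N(y)$ against the budget $2n-5$ and using twin-freeness, the case $x\sim y$ should lead to the subdivided-spoke structure of $G_0$, and the case $x\nsim y$ to $C_5$.

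I expect this last step to be the main obstacle: the case analysis showing that a reduced extremal graph with a degree-$2$ vertex is $C_5$ or $G_0$. The plan is to exploit the tight edge budget $2n-5$ to show that almost every vertex has degree $2$ with neighbours forced into a few hubs, and then identify the hub configuration.
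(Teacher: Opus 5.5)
The paper gives no proof of this statement; it quotes it from Henning and Southey, so your argument has to stand on its own. The parts you actually carry out are correct:
\begin{itemize}
\item the three base graphs are extremal;
\item degree-$2$ duplication preserves $s=2n-5$, diameter $2$ and the absence of a dominating vertex;
\item $\delta(G)\ge 2$;
\item deleting one of two degree-$2$ twins is a valid induction step.
\end{itemize}
The gap is that the whole content of the characterization is the claim that a twin-free extremal graph is $C_5$, $G_0$ or the Petersen graph. For this you offer only heuristics, and both are misdirected.

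\textbf{The case $\delta\ge 3$.} The Moore bound $n\le\Delta^2+1$ is useless, since nothing bounds $\Delta$. Average degree below $4$ only yields at least ten vertices of degree $3$, so nothing in your outline forces regularity. What works is local counting at a degree-$3$ vertex $v$. Put $R=V\setminus N[v]$. Then
\begin{itemize}
\item $3+e(N(v))+e(N(v),R)+e(R)=2n-5$,
\item $e(N(v),R)\ge n-4$ (domination of $R$),
\item $e(N(v),R)+2e(R)\ge 3(n-4)$ (degrees in $R$).
\end{itemize}
Together these force $N(v)$ to be independent, and every $r\in R$ to have degree exactly $3$ with exactly one neighbour in $N(v)$. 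The distance-$2$ requirements then give each $r$ one neighbour in each of the other two classes $R_x,R_y,R_z$. Counting common neighbours of nonadjacent cross pairs forces $|R_x|=|R_y|=|R_z|=2$. Hence $G$ is cubic of order $10$ and diameter $2$, i.e.\ the Petersen graph.

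\textbf{The case $\delta=2$.} Your predicted outcome of the split is wrong. In $G_0$ every degree-$2$ vertex has nonadjacent neighbours (a corner and the centre), so $G_0$ cannot arise from $x\sim y$; in fact that case is impossible. Write $X=N(x)\setminus N[y]$, $Y=N(y)\setminus N[x]$ and $Z=(N(x)\cap N(y))\setminus\{v\}$.

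If $x\sim y$, the edge budget leaves only $|X|+|Y|-2$ edges inside $X\cup Y\cup Z$. But $X,Y\ne\emptyset$ (otherwise $x$ or $y$ is dominating). Each $X$--$Y$ pair must be adjacent or have a common neighbour inside this set. So $X\cup Y$ lies in a single component, which needs at least $|X|+|Y|-1$ edges, a contradiction.

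If $x\nsim y$, the budget is exactly $|X|+|Y|-1$. This forces every vertex of $Z$ to be a degree-$2$ twin of $v$, so $Z=\emptyset$ when $G$ is twin-free. It also forces $G[X\cup Y]$ to be a tree in which every vertex has a neighbour in the other class and every $X$--$Y$ pair is at distance at most $2$. Such a tree has diameter at most $3$. Twin-freeness then leaves only two possibilities:
\begin{itemize}
\item $K_2$, which gives $C_5$;
\item $P_4$ with both central vertices in the same class, which gives $G_0$.
\end{itemize}
So both $C_5$ and $G_0$ come from the case $x\nsim y$. This tree analysis, together with the degree-$3$ counting above, is what your proposal is missing.
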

\begin{lemma}\label{Lemma:DomiOfGamma0Is2ButTDIs3}
Let $G_0$ be the graph depicted in Figure~\ref{fig:graph of diam 2;cycle 0f length $5$ and its duplication}. Then $\gamma(G_0)=2$ and $\gamma_t(G_0)=3$.     
\end{lemma}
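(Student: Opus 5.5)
We can settle $G_0$ by direct inspection, since it is a small fixed graph. First we read off its structure from Figure~\ref{fig:graph of diam 2;cycle 0f length $5$ and its duplication}. There is an outer triangle on vertices $a,b,c$ and a central vertex $o$. Each outer vertex is joined to $o$ by a path of length two, through subdivision vertices $x$ (between $a$ and $o$), $y$ (between $b$ and $o$) and $z$ (between $c$ and $o$). So $n=7$, and the edges are $ab,bc,ca,ax,xo,by,yo,cz,zo$, which gives $9=2n-5$ edges, consistent with Theorem~\ref{thm H-M: dim2 m  geq 2n-5}. The degrees are $3$ for $a,b,c,o$ and $2$ for $x,y,z$.

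\emph{Step 1: $\gamma(G_0)=2$.} No vertex has degree $6$, so $G_0$ has no dominating vertex and $\gamma(G_0)\ge 2$. For the upper bound we exhibit $D=\{a,o\}$. Here $N(o)=\{x,y,z\}$ and $N(a)=\{b,c,x\}$, so every vertex outside $D$ is dominated. Hence $\gamma(G_0)=2$.

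\emph{Step 2: $\gamma_t(G_0)\ge 3$.} A total dominating set of size $2$ must be an edge $\{p,q\}$ with $N(p)\cup N(q)=V(G_0)$. The automorphisms of $G_0$ permute $\{a,b,c\}$ together with the matching subdivision vertices. Under these, the nine edges fall into three orbits: triangle edges such as $ab$, edges such as $ax$, and edges such as $xo$. It therefore suffices to check one representative from each orbit:
\begin{itemize}
\item $N(a)\cup N(b)=\{a,b,c,x,y\}$ misses $o$ and $z$;
\item $N(a)\cup N(x)=\{a,b,c,x,o\}$ misses $y$ and $z$;
\item $N(x)\cup N(o)=\{a,o,x,y,z\}$ misses $b$ and $c$.
\end{itemize}
So no total dominating set of size $2$ exists, and $\gamma_t(G_0)\ge 3$.

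\emph{Step 3: $\gamma_t(G_0)\le 3$.} Take $T=\{a,x,o\}$, which induces a path, so every vertex of $T$ has a neighbour in $T$. Its neighbourhoods give
\[
N(a)\cup N(x)\cup N(o)=\{b,c,x\}\cup\{a,o\}\cup\{x,y,z\}=V(G_0).
\]
Hence $T$ is a total dominating set and $\gamma_t(G_0)=3$.

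The only place that needs care is Step 2, namely correctly classifying the edges up to symmetry so that the case check is exhaustive; the remaining steps are immediate verifications.
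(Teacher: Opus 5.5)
Your proof is correct. Your upper bounds are the same as the paper's: the dominating pair is the centre plus a triangle vertex, and the total dominating set is the path triangle vertex, subdivision vertex, centre.

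The two arguments differ in how they get $\gamma_t(G_0)\ge 3$. The paper works from the domination side. It shows that every minimum dominating set of $G_0$ has the form $\{v,u_i\}$, with $v$ the centre and $u_i$ a triangle vertex. Since $v\nsim u_i$, no minimum dominating set is total, so no total dominating set of size $2$ exists. You work from the total-domination side: a total dominating pair must be an edge, and you check one representative of each of the three edge orbits under the $S_3$ symmetry.

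Your case check is exhaustive and easy to verify. The paper's route gives a little more information, namely the full list of $\gamma$-sets, but several of its steps are left as ``similar argument.''

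Either case analysis can be skipped. Every vertex of $G_0$ has degree at most $3$ and $|V(G_0)|=7$, so the open neighbourhoods of any two vertices cover at most $6$ vertices. Hence $\gamma_t(G_0)\ge\lceil 7/3\rceil=3$ directly.
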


\begin{proof}
It is easy to see that the set $D=\{v,u_1\}$ is a dominating set, where $\deg(v)=\deg(u_1)=3$, $v$ is the center vertex (shown in red), and $u_1$ is an outside vertex (shown in black). Hence $\gamma(G_0)\le 2$. Since no single vertex dominates all vertices of $G_0$, we obtain $\gamma(G_0)=2$.

Now we claim that any minimum dominating set $D_1$ must contain the vertex $v$ and one of the vertices $u_1,u_2,u_3$. Suppose $v\notin D_1$. Then $v_i\in D_1$ for some $i\in[3]$. Without loss of generality, assume that $v_1\in D_1$. Observe that $u_i$ is not adjacent to $v_1$ for each $i\in[3]\setminus\{2\}$. Hence there must exist a vertex $x\in V(G_0)$ such that both $u_1$ and $u_3$ are adjacent to $x$ in order for the set $\{v_1,x\}$ to dominate $G_0$. Clearly $x\in\{u_1,u_2,u_3\}$.

If $x=u_1$, then the vertex $v_2$ is not dominated by the set $\{v_1,u_1\}$, contradicting $\gamma(G_0)=2$. The other possibilities lead to similar contradictions. Therefore $v\in D_1$.

By a similar argument, $D_1$ must also contain one of the vertices $u_1,u_2,u_3$. Hence $D_1=\{v,u_i\}$ for some $i\in[3]$. Consequently, any minimum dominating set of $G_0$ contains the vertex $v$, which has no neighbor in $D_1$. Thus no dominating set of size $2$ can be a total dominating set, and therefore $\gamma_t(G_0)\ge 3$.

Finally, the set $\{v,v_3,u_1\}$ forms a total dominating set of $G_0$. Hence $\gamma_t(G_0)=3$.
\end{proof}

\begin{theorem}\label{Thm:TotalDomOfGamma3;WhenGammaIsaMemberOfH}
Let $G\in \mathcal{H}.$ Then $\gamma_t(G)$ is either $3$ or $6.$    
\end{theorem}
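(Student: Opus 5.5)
The plan is to reduce everything to the three generators of $\mathcal{H}$ and then compute $\gamma_t$ for each generator. The key observation is that duplicating a degree-$2$ vertex is a special case of multiplication of vertices in the sense of Definition~\ref{def:mult-of-vertices}. Moreover, a composition of such duplications is again a single blow-up $G\bigodot m$ of the original generator, because duplicates are independent twins of the original vertex. So the first step is an induction on the number of duplications: every $G\in\mathcal{H}$ has the form $H\bigodot m$ with $H\in\{C_5,G_0,P\}$, where $P$ is the Petersen graph. One subtlety needs checking here. After a duplication, the set of degree-$2$ vertices can change, since duplicating $x$ raises the degrees of the neighbours of $x$. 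This does not matter, because any vertex of $H\bigodot m$ lies in some part $V_i$, and duplicating it just increases $m_i$ by one.

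With this in hand, Lemma~\ref{Lemma:TD(GammaI)EqualToTD(GammaOm)} gives $\gamma_t(G)=\gamma_t(H)$. This is the heart of the argument: the total domination number is an invariant of the generator alone. It is also consistent with Theorem~\ref{thm H-M: dim2 m  geq 2n-5}, since no dominating vertex is ever created by blowing up.

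The second step is to evaluate the three generators. For $C_5$, each vertex totally dominates only its two neighbours, so $\gamma_t(C_5)\ge\lceil 5/2\rceil=3$. Three consecutive vertices form a total dominating set, so $\gamma_t(C_5)=3$. For $G_0$, Lemma~\ref{Lemma:DomiOfGamma0Is2ButTDIs3} already gives $\gamma_t(G_0)=3$. Hence every graph in $\mathcal{H}$ generated from $C_5$ or $G_0$ has $\gamma_t=3$.

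The remaining and hardest step is the Petersen branch, which must supply the value $6$ in the statement. For the lower bound I would start from the standard counting argument on the $3$-regular graph $P$. Each chosen vertex totally dominates exactly its $3$ neighbours, and $P$ has girth $5$, so two chosen adjacent vertices cover each other but share no neighbours. I would then refine this by analysing the components of $P[D]$ for a total dominating set $D$: each component has at least two vertices, and the neighbourhoods of vertices in different components overlap in a controlled way because $P$ is triangle- and $4$-cycle-free. For the upper bound I would exhibit an explicit total dominating set, built from disjoint edges of $P$ (say a suitable subset of a perfect matching between the outer $5$-cycle and the inner pentagram) that covers all ten vertices.

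I expect this Petersen computation to be the main obstacle, and it must be done with care. The naive counting bound only gives $\lceil 10/3\rceil=4$. Closing the gap to the value asserted in the theorem therefore depends entirely on this case analysis of the components of $P[D]$. I would also verify the resulting value by an exhaustive check of small candidate sets, using the symmetry of $P$ (its automorphism group is edge-transitive, so one may fix one edge of $D$). This check confirms which value of $\gamma_t(P)$ the second alternative in the statement actually records.

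Once the three generator values are established, the theorem follows by combining them with the invariance from the first step. In particular $\gamma(G)=2=\gamma_t(G)$ never occurs in $\mathcal{H}$.
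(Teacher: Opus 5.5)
\paragraph{Verdict.} Your reduction matches the paper's route and is correct. The Petersen step cannot be completed, because the value $6$ is wrong: $\gamma_t(P)=4$, so the statement itself is false.

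\paragraph{What is correct.} Successive degree-$2$ duplications compose into a single blow-up $H\bigodot m$ with $H\in\{C_5,G_0,P\}$. Lemma~\ref{Lemma:TD(GammaI)EqualToTD(GammaOm)} then gives $\gamma_t(H\bigodot m)=\gamma_t(H)$, and your values $\gamma_t(C_5)=\gamma_t(G_0)=3$ are correct.

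\paragraph{The gap.} Label $P$ by an outer cycle $o_0o_1o_2o_3o_4$, inner edges $i_ji_{j+2}$ (indices mod $5$), and spokes $o_ji_j$. Take $D=N[o_0]=\{o_0,o_1,o_4,i_0\}$. The neighbourhoods $N(o_0)=\{o_1,o_4,i_0\}$, $N(o_1)=\{o_0,o_2,i_1\}$, $N(o_4)=\{o_0,o_3,i_4\}$ and $N(i_0)=\{o_0,i_2,i_3\}$ together cover all ten vertices, so $D$ is a total dominating set.

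The same holds for every vertex $c$. The two further neighbours of each of the three neighbours of $c$ are pairwise distinct and lie outside $N[c]$, since otherwise $P$ would contain a $3$- or $4$-cycle. Hence they are exactly the six vertices of $V(P)\setminus N[c]$. Since $c$ also dominates its neighbours and they dominate $c$, $N[c]$ totally dominates $P$.

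Combined with your counting bound $\lceil 10/3\rceil=4$, this gives $\gamma_t(P)=4$. In your planned analysis of the components of $P[D]$, a single star $K_{1,3}$ is exactly the configuration you would have to exclude, and it cannot be excluded. No amount of care closes this gap.

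\paragraph{Consequence for the statement.} $P$ lies in $\mathcal{H}$: it has $n=10$ and $s=15=2n-5$, and being cubic it admits no degree-$2$ duplication. So the theorem as stated is false. What your argument, and the paper's, actually proves is $\gamma_t(G)\in\{3,4\}$ for every $G\in\mathcal{H}$. The paper's proof has the same error: it simply asserts as ``well known'' that $\gamma_t(P)=6$.

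\paragraph{Where the $6$ comes from.} It is the value your upper-bound construction produces. Disjoint spokes such as $\{o_0,i_0,o_1,i_1,o_3,i_3\}$ do totally dominate $P$. No two disjoint edges do, however. For the edge $o_0i_0$, dominating the remaining vertices $o_2,o_3,i_1,i_4$ with two vertices forces the pair $\{o_1,o_4\}$ or $\{i_2,i_3\}$, and neither pair is an edge; since $P$ is edge-transitive, this covers every choice of first edge. So $6$ is the minimum size of a total dominating set $D$ for which $P[D]$ has a perfect matching, i.e.\ the paired domination number, not $\gamma_t(P)$.

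Your closing consequence, that $\gamma(G)=\gamma_t(G)=2$ never occurs in $\mathcal{H}$, still holds, since every value is at least $3$.
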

\begin{proof}
It is well known that $\gamma_t(C_5)=3$ and the total domination number of the Petersen graph is $6.$ By Lemma \ref{Lemma:DomiOfGamma0Is2ButTDIs3}, $\gamma_t(G_0)=3.$ Now, let $\Gamma\in \mathcal{H},$ then by Lemma \ref{Lemma:TD(GammaI)EqualToTD(GammaOm)}, $\gamma_t(G)=3.$  
\end{proof}

\section*{Acknowledgements} 
The author gratefully acknowledges financial support from the NBHM research project (Reference No.~02011/29/2025NBHM(RP)/R\&D~II/11951), and sincerely thanks the National Board for Higher Mathematics (NBHM), India, for funding this research. The author also gratefully acknowledges the excellent research environment and facilities provided by Dhirubhai Ambani University, Gandhinagar, Gujarat. 

\subsection*{Data Availability Statements}
Data sharing not applicable to this article as no datasets were generated or analysed during the current study.

\subsection*{Competing Interests} The authors have no competing interests to declare that are relevant to the content of this article.

\bibliographystyle{amsplain}
\bibliography{gen-inv-lcp(sb).bib}
\end{document}